\newtheorem{theorem}{Theorem}
\newtheorem{claim}[theorem]{Claim}
\newtheorem{corollary}[theorem]{Corollary}
\newtheorem{lemma}[theorem]{Lemma}
\newcommand{\Nnn}{{\mathbb N}}
\newcommand{\Zzz}{{\mathbb Z}}
\newcommand{\Rrr}{{\mathbb R}}
\newcommand{\va}{\mathbf{a}}
\newcommand{\vb}{\mathbf{b}}
\newcommand{\vu}{\mathbf{u}}
\newcommand{\vv}{\mathbf{v}}
\newcommand{\vw}{\mathbf{w}}
\newcommand{\vz}{\mathbf{z}}
\newcommand{\vx}{\mathbf{x}}
\newcommand{\vy}{\mathbf{y}}
\newcommand{\vp}{\mathbf{p}}
\newcommand{\vq}{\mathbf{q}}
\newcommand{\uu}{\mathbf{u}}
\begin{document}

\title[Infinite geometric graphs]{Infinite geometric graphs and properties of metrics}
\author{Anthony Bonato}
\address{Department of Mathematics\\
Ryerson University\\
Toronto, ON\\
Canada, M5B 2K3} \email{abonato@ryerson.ca}
\author{Jeannette Janssen}
\address{Department of Mathematics and Statistics\\
Dalhousie University\\
Halifax, NS\\
Canada, B3H 3J5}
\email{jeannette.janssen@dal.ca}
\keywords{graphs, geometric graphs, random graphs, metric spaces, isometry}
\thanks{The authors gratefully acknowledge support from NSERC and Ryerson University}
\subjclass{	05C63, 05C80, 54E35, 46B04}

\begin{abstract}
We consider isomorphism properties of infinite random geometric graphs defined over a variety of metrics. In previous work, it was shown that for $\mathbb{R}^n$ with the $L_{\infty}$-metric, the infinite random geometric graph is, with probability 1, unique up to isomorphism. However, in the case $n=2$ this is false with either of the $L_2$-metric. We generalize this result to a large family of metrics induced by norms. Within this class of metric spaces, we show that the infinite geometric graph is not unique up to isomorphism if it has the new property which we name truncating: each step-isometry from a dense set to itself is an isometry. As a corollary, we derive that the infinite random geometric graph defined in $L_p$ space is not unique up to isomorphism with probability 1 for all finite $p>1.$
\end{abstract}
\maketitle

\section{Introduction}
Geometric random graph models play an emerging role in the modelling of
real-world networks such as on-line social networks \cite{bon1}, wireless
networks \cite{frieze}, and the web graph \cite{bon,FFV}. In such
stochastic models, vertices of the network are represented by points in a
suitably chosen metric space, and edges are chosen by a mixture of relative
proximity of the vertices and probabilistic rules. In real-world networks,
the underlying metric space is a representation of the hidden reality that
leads to the formation of edges. Such networks can be viewed as embedded in
a \emph{feature space}, where vertices with similar features are more closely positioned. For example,
in the case of on-line social networks, for
example, users are embedded in a high dimensional \emph{social space}, where
users that are positioned close together in the space exhibit similar
characteristics. The web graph may be viewed in \emph{topic space}, where
web pages with similar topics are closer to each other.
We note that the theory of random geometric graphs has been
extensively developed (see, for example, \cite{bol,ellis,walters}, \cite{goel}, and the books \cite{mr,mpen}).

The study of countably infinite graphs is motivated in part by the theory of the \emph{infinite random graph}, or the
\emph{Rado} graph (see \cite{cam,cam1,er}), written $R.$
The graph $R$ was first discovered by Erd\H{o}s and
R\'{e}nyi \cite{er}, who proved that with probability $1,$ any two randomly
generated countably infinite graphs, where vertices are joined independently
with probability $p\in (0,1)$, are isomorphic. The graph $R$ has several
remarkable properties, such as universality (all countable graphs are
isomorphic to an induced subgraph) and homogeneity (every isomorphism
between finite induced subgraphs extends to an isomorphism). The
investigation of $R$ lies at the intersection of logic, probability theory,
and topology; see \cite{cam,cam1,er} and Chapter 6 of \cite%
{bonato}.

In \cite{bon2} we considered infinite random geometric graphs. In our model,
the vertex set is a countable dense subset of $\mathbb{R}^{n}$ for a fixed $n\geq 1$ and two
vertices are adjacent if the distance between the two vertices is no larger
than some fixed real number. More precisely, consider a metric space $S$ with metric
\begin{equation*}
d:S\times S\rightarrow \mathbb{R},
\end{equation*}%
a parameter $\delta \in {\mathbb{R}}^{+},$ a countably infinite subset $V$ of $S$, and $p\in (0,1)
$. The \textsl{Local Area Random Graph $\mathrm{LARG}(V,\delta ,p)$} has
vertices $V,$ and for each pair of vertices $u$ and $v$ with $d(u,v)<\delta $,
an edge is added independently with probability $p$. The model can be similarly defined for $V$ finite.
For simplicity, we consider only the case when $\delta= 1$; we write
$\mathrm{LARG}(V,p)$ in this case. The LARG model generalizes well-known classes of
random graphs. For example, special cases of the $\mathrm{LARG}$ model
include the random geometric graphs, where $p=1$, and the binomial random
graph $G(n,p)$, where $S$ has finite diameter $D$, and $\delta \geq D$.


Let $\Omega_n$ be the set of metrics defined on $\mathbb{R}^n.$
We will say that a metric $d\in \Omega_n$ is \emph{unpredictable} if
 for all dense subsets  $V$ in $ \Rrr^n$ such that for all $p\in (0,1)$, with positive probability graphs generated by $\mathrm{LARG}(V,p)$ are non-isomorphic. The results of this paper show that a large class of metrics  $d\in \Omega_n$ are unpredictable. It was shown in \cite{bon2} that the $L_{2}$-metric is unpredictable, while the $L_{\infty}$-metric is not. For all $n\in
\mathbb{N}^+$ it was shown in \cite{bon2} that the $L_{\infty }$-metric is predictable, while the $L_{2}$-metric is not (that is, \emph{predictable}: there exists a countable dense set $V$ in $\Rrr^n$ such that for all $p\in (0,1)$, with probability $1$, any two graphs generated by $\mathrm{LARG}(V,p)$ are isomorphic).

We have therefore, the following natural classification program for infinite random geometric graphs.

\medskip\noindent
\textbf{Geometric Isomorphism Dichotomy (GID)}: Determine which metrics in $\Omega_n$ are unpredictable.
\medskip

In the next section, we describe our main results which greatly extend our understanding of the GID for a large family of so-called norm derived metrics, which includes all the familiar $L_p$-metrics and the hexagonal metric.
The hexagonal metric arises in the study of Voronoi diagrams and period graphs (see \cite{fu}) which have found applications to nanotechnology. We note that in \cite{bon3}, it was shown that the hexagonal metrics in the case $n=2$ are unpredictable.

All graphs considered are simple, undirected, and countable unless otherwise
stated.
Given a metric space $S$ with distance function $d$, denote the (open) \emph{ball of radius $\delta
$ around $x$} by
\begin{equation*}
B_{\delta }(x)=\{u\in S:d(u,x)<\delta \}.
\end{equation*}%
We will sometimes just refer to $B_{\delta }(x)$ as a \emph{$\delta$-ball} or \emph{ball of radius} $\delta$. A subset $V$ is \emph{dense} in $S$ if for every point $x\in S$, every ball around $x$
contains at least one point from $V$. We refer to $u\in S$ as points or
vertices, depending on the context. Throughout, let $\mathbb{N}$, $\mathbb{N}%
^{+}$, $\mathbb{Z},$ and $\mathbb{R}$ denote the non-negative integers, the positive integers, the integers, and real numbers,
respectively. We use \textbf{bold} notation for vectors $\vu \in \mathbb{R}^n.$ The dot product of two vectors $\vu$ and $\vv$ is denoted $\vu \cdot \vv$. For a reference on graph theory the reader is directed to \cite%
{diestel,west}, while \cite{bryant} is a reference on metric spaces.

\subsection{Main results}

Our main result is Theorem~\ref{non2} stated below, which settles the GID for a large class of metrics in the plane. Before we state the theorem, we first need some definitions. Throughout, $d$ is a metric in $\Omega_2.$ The definitions stated in this section naturally extend to higher dimensions and more general metric spaces, but we focus on the plane for simplicity, since our results apply there.

\subsubsection{The truncation property of metric spaces}\label{sec:trunc}

A function $f:V\rightarrow V$, where $V\subseteq \Rrr^2$ is a {\sl step-isometry} if for every $u,v\in V$, $$\lfloor d(u,v)\rfloor =\lfloor d(f(u),f(v))\rfloor.$$ Thus, a step-isometry preserves distances in truncated form.
A metric $d\in\Omega$ has the {\sl truncating property} if for all countable dense sets $V$ in $\Rrr^2$, every step-isometry $f:V\rightarrow V$ is an isometry. In \cite{bon2}, it was shown that $\Rrr^2$ with the $L_2$-metric has the truncating property, but the $L_\infty$-metric does not.
For example, in the case $n=1$ with the $L_{\infty}$-metric, consider the map $f:\Rrr\rightarrow \Rrr$ given by:
\[
f(x) =\left\{ \begin{array}{ll} \lfloor x\rfloor + \frac23 (x-\lfloor x\rfloor)&\mbox{if }x-\lfloor x\rfloor \leq \frac12
\vspace*{2mm},\\
\lfloor x\rfloor  +\frac43 (x-\lfloor x\rfloor) -\frac13 &\mbox{else. }\end{array}\right.
\vspace*{2mm}\\
\]
It is straightforward to check that $f$ is a step-isometry, but not an isometry.

Our motivations for considering step-isometries is that any isomorphism between graphs generated by the LARG model with $p\in (0,1)$ must be a step-isometry. We record this fact in the following lemma, which follows directly from Theorem~2.1 and Corollary~2.5 of \cite{bon2}.
\begin{lemma}\label{newlemmm}
For all dense sets $V$ and $p\in (0,1)$, if $f$ is an isomorphism between two graphs generated by the $\mathrm{LARG}(V,p)$, then with probability $1$, $f$ is a step-isometry.
\end{lemma}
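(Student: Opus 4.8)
Since the lemma is stated to follow from Theorem~2.1 and Corollary~2.5 of \cite{bon2}, the plan is to explain how those two facts combine. Write $d_G$ for graph distance in $\mathrm{LARG}(V,p)$. As $V$ is countable, there are only countably many pairs of vertices, so it suffices to fix $u,v\in V$ and show that, with probability $1$, the value $\lfloor d(u,v)\rfloor$ is determined by the isomorphism type of $\mathrm{LARG}(V,p)$ together with the marked pair $(u,v)$. A countable intersection of probability-$1$ events, applied to both sampled graphs, then yields the lemma, since any isomorphism preserves every graph-theoretic quantity.

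There are two inputs. First, deterministically, an edge of $\mathrm{LARG}(V,p)$ can occur only between points at distance strictly less than $1$, so along a shortest path from $u$ to $v$ the triangle inequality gives $d(u,v)<d_G(u,v)$. Second, I would use the saturation property behind Theorem~2.1 of \cite{bon2}: with probability $1$, for every integer $k\ge 2$ and every pair with $d(u,v)<k$ there is a path from $u$ to $v$ of length at most $k$. This is a second Borel--Cantelli argument: subdivide the geodesic segment from $u$ to $v$ into $k$ pieces each of length $d(u,v)/k<1$, perturb the division points to vertices of $V$ close enough that consecutive distances stay below $1$, observe that infinitely many such candidate paths are available using pairwise disjoint edge-indicators, each realized with probability at least $p^{k}>0$, and conclude that almost surely one of them occurs; a countable union over pairs and over $k$ preserves probability $1$.

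Combining the two inputs, with probability $1$ one has, for all integers $k\ge 2$, that $d_G(u,v)\le k$ if and only if $d(u,v)<k$. Hence $d_G(u,v)=\lfloor d(u,v)\rfloor+1$ whenever $\lfloor d(u,v)\rfloor\ge 2$, while $d_G(u,v)=1$ exactly when $u\sim v$, which forces $\lfloor d(u,v)\rfloor=0$. Thus $d_G$ already recovers $\lfloor d(u,v)\rfloor$ except on the layer $d_G(u,v)=2$, that is, when $u\not\sim v$ and $d(u,v)\in(0,2)$, where a priori $\lfloor d(u,v)\rfloor\in\{0,1\}$. To separate the two cases one invokes a finer graph-theoretic invariant, which is the content of Corollary~2.5 of \cite{bon2}: for instance, whether some third vertex $t$ satisfies $d_G(v,t)=1$ (so $t$ is adjacent to $v$, hence $d(v,t)<1$) while $d_G(u,t)\ge 3$ (so $d(u,t)\ge 2$); using the saturation property again, with probability $1$ such a $t$ exists precisely when $d(u,v)>1$. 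Since graph distance, adjacency, and the third-vertex condition are all invariant under graph isomorphism, on the resulting probability-$1$ event any isomorphism between two copies of $\mathrm{LARG}(V,p)$ preserves $\lfloor d(u,v)\rfloor$ for every pair, i.e.\ is a step-isometry.

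The main obstacle is exactly the $d_G(u,v)=2$ layer, and within it the borderline subcase $d(u,v)=1$: here $u\not\sim v$, and no short path, common-neighbour count, or comparison with third vertices separates $d(u,v)=1$ from $d(u,v)<1$ by an elementary argument, because all such conditions involve strict inequalities with slack and survive passage to the limit. This is precisely why the lemma is quoted as a consequence of the extension and saturation statements Theorem~2.1 and Corollary~2.5 of \cite{bon2}, where the delicate bookkeeping at integer distances is carried out, rather than proved here from scratch.
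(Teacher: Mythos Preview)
The paper does not supply a proof of this lemma; it simply records that the statement follows from Theorem~2.1 and Corollary~2.5 of \cite{bon2}. Your write-up is therefore not competing against an argument in the paper but reconstructing one from the citation, and as such it is sound in outline: the deterministic bound $d(u,v)<d_G(u,v)$ together with the almost-sure saturation property does pin down $\lfloor d(u,v)\rfloor$ from $d_G(u,v)$ in every case except the layer $d_G(u,v)=2$, exactly as you say. One small sharpening: your identity $d_G(u,v)=\lfloor d(u,v)\rfloor+1$ already holds whenever $\lfloor d(u,v)\rfloor\ge 1$ (not only $\ge 2$), since $d(u,v)\ge 1$ forbids the edge; this confines the ambiguity to $d_G=2$ immediately without a separate check.

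Where your sketch remains genuinely incomplete is the point you yourself flag. Your third-vertex witness (a $t$ with $t\sim v$ and $d_G(u,t)\ge 3$) separates $d(u,v)>1$ from $d(u,v)\le 1$, not $d(u,v)\ge 1$ from $d(u,v)<1$, so it cannot decide the case $d(u,v)=1$; if $V$ happens to contain a pair at exact distance~$1$, your criterion would misclassify it. You are right that this borderline is precisely what the extension/saturation machinery of \cite{bon2} is needed for, and deferring to it is consistent with what the present paper does. In short, your outline matches the intended route and correctly isolates the integer-distance case as the place where the real work in \cite{bon2} lies.
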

For any dense set $V$ in $\Rrr^2$, and metric $d$, there are only a limited number of possibilities to form a bijective isometry from $V$ to $V$. This imposes strong restrictions on any possible isomorphism between graphs with vertex set $V$. This suggests a close relationship between the truncating property and unpredictability.  We will show that, for the cases we consider, the metrics which are unpredictable are indeed precisely those that have the truncating property.

\subsubsection{Norm-derived metrics and their shape}

A metric $d$ is \emph{translation-invariant} if for all $\mathbf{a}, \mathbf{x}, \mathbf{y} \in \Rrr^2$, we have that $$d(\vx ,\vy)= d(\vx + \va, \vy +\va).$$ The metric $d$ is \emph{homogeneous} if for all $\alpha \in \Rrr$, $d(\alpha\vx, \alpha\vy )=|\alpha | d(\vx,\vy)$. Any
translation-invariant, homogeneous metric $d$ induces a norm, given by  $\|\vx \| = d(\vx,\mathbf{0})$. Conversely, a norm $\|\cdot \|$ induces a translation-invariant, homogeneous metric $$d(\vx,\vy)=\|\vx - \vy \|.$$ The metrics we consider are precisely those that are derived from norms. We define a \emph{norm-derived} metric as a metric defined by a norm (referred to as the \emph{underlying norm}). A well-studied family of norm-derived metrics consists of metrics derived from the $L_p$ norm, where $p\geq 1$; recall that for $\vx = (x_1, x_2) \in \mathbb{R}^2$, $\| \vx \|_p = \left(|x_1|^p + |x_2|^p \right)^{1/p}.$

It is straightforward to check that the unit ball around $\mathbf{0}$ of any norm-derived metric $d$ must be a convex, point-symmetric set $P$ which we call the \emph{shape} of $d$.
Conversely, a convex, point-symmetric set $P\subseteq \Rrr^2$ with non-empty interior defines a norm
$\| \cdot \|_P$ and a corresponding metric $d_P$ as follows. Fix a vector $\vx\in\Rrr^2$, and let $\mathbf{b}$ be the unique point where the ray from $\mathbf{0}$ to $\vx$ intersects $P$. Then we have that
\[
\|\vx \|_P =\frac{\|\vx\|_2}{\|\mathbf{b}\|_2},
\]
and the metric $d_P$ is defined as $$d_P(\vx,\vy)=\|\vx-\vy\|_P.$$ Observe that the unit ball around $\mathbf{0}$ of $d_P$ equals $P$. Note that in $\Rrr ^2$, for $p>1,$ the $L_p$ metric has shape a superellipse or L\'ame curve, while the $L_\infty$-metric has shape a square with sides parallel to the coordinate axes, and the $L_1$-metric has shape a square with the diagonals parallel to the coordinate axes. Throughout this paper, we will use the notation $d_P$ for the norm-derived metric with shape $P$.

We only consider norm-derived metrics whose shape is either a polygon (and we call such metrics \emph{polygonal}; these include the $L_p$-metrics in the case $p=1,\infty$), or metrics whose shape can be described by a smooth curve (we call such metrics \emph{smooth}; they include the $L_p$ metrics in the case $p>1$ and $p\neq \infty$). We think that the results in this paper apply equally to norm-based metric whose shape has both straight and curved sides, and that this can be proved with methods similar to those presented here. However, we do not pursue this generalization here, due to the excessive technicalities. To conserve notation, \emph{from now on we denote by $\Omega$ the set of all both smooth and polygonal norm-derived metrics in $\Rrr^2$}.

\subsubsection{Main results}\label{subsec:main}

 We now have defined almost all concepts needed to state our main results, which constitute a classification of all metrics in $\Omega$ in terms of predicability and the truncating property. We next define a special class of polygonal metrics; our main result will show that this is the class of metrics in $\Omega $ which do not have the truncating property.
 A metric $d\in \Omega$ is a {\sl box metric} if its shape is a {\sl parallelogram}.

\begin{theorem}\label{thm:main}
A metric $d\in \Omega$ has the truncating property if and only if $d$ is not a box metric.
\end{theorem}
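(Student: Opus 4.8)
The proof splits into two implications. First I would show that a box metric $d_P$ fails the truncating property, by constructing an explicit step-isometry of a dense set that is not an isometry. The model is the one-dimensional $L_\infty$ example already in the excerpt: there, the unit ball is an interval, and the piecewise-linear map $f$ stretches the left half of each integer cell and compresses the right half (or vice versa) while preserving which integer cell a difference lies in. For a general parallelogram shape $P$, I would change coordinates so that $P$ becomes the $L_\infty$ unit square $[-1,1]^2$; since the change of coordinates is linear and invertible, it carries $d_P$ to a positive scalar multiple of the $L_\infty$ metric, and step-isometries transfer up to rescaling. Then applying the one-dimensional construction independently in each coordinate produces a map on $\Rrr^2$ that is a step-isometry for the (rescaled) $L_\infty$ metric but distorts distances; pulling back through the linear map gives a step-isometry of a dense subset of $\Rrr^2$ under $d_P$ that is not an isometry. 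A small amount of care is needed to ensure the dense set is mapped into itself — one takes $V$ to be the image of a lattice-adapted dense set under $f^{-1}$, or simply checks $f$ is a bijection of $\Rrr^2$ and restricts to a dense $f$-invariant set such as $V=\Qqq^2$ transported through the coordinate change, since $f$ has rational slopes on rational cells.

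For the converse — every non-box metric in $\Omega$ has the truncating property — I would argue that any step-isometry $f\colon V\to V$ of a dense set must in fact be an isometry, and here the geometry of the shape $P$ does the work. The key idea is that the truncated distance $\lfloor d_P(u,v)\rfloor$, as $u,v$ range over the dense set, determines $d_P(u,v)$ exactly once we know it on a fine enough scale: rescaling, $\lfloor d_P(u,v)/\varepsilon\rfloor$ is preserved by a step-isometry of the $\varepsilon$-rescaled problem only in very rigid situations unless $P$ is a parallelogram. Concretely, I would look at the "level sets'' $\{x : \lfloor d_P(x, a)\rfloor = \lfloor d_P(x,b)\rfloor \text{ for all relevant } a,b\}$ and show that when $P$ is smooth, or polygonal but not a parallelogram, the boundary curvature (in the smooth case) or the presence of more than four extreme directions / a vertex configuration incompatible with a single tiling direction (in the polygonal non-parallelogram case) forces the map to preserve genuine distances, not merely their integer parts. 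This is exactly the sort of statement proved in \cite{bon2} for the $L_2$-metric, and I would expect the paper to isolate a geometric lemma — roughly, "if $P$ is not a parallelogram then the tiling of the plane by integer $d_P$-balls has no nontrivial cell-preserving self-map'' — and then bootstrap from integer scale to all scales using density and the translation-invariance/homogeneity of $d_P$.

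The main obstacle is this converse direction, specifically the polygonal non-parallelogram case, which is more delicate than the smooth case. For smooth shapes one can differentiate: a step-isometry that is forced to agree with an isometry on a dense set of "scales'' must have derivative of modulus one, hence be an isometry, and smoothness of $\partial P$ gives enough regularity to run this argument. For polygons the distance function $d_P$ is piecewise linear with the same facet directions as $P$, so there is genuine flexibility within each facet-cell — exactly the flexibility exploited in the box case — and one must show that a polygon with five or more sides, or four sides not forming a parallelogram, creates an over-determined system: the "stretch'' allowed along one facet direction is incompatible with that allowed along another once they must be realized simultaneously by a single map on a two-dimensional dense set. I would handle this by picking three or more non-parallel facet directions of $P$, writing down the constraint a step-isometry imposes in each, and showing the only common solution is an affine isometry; the parallelogram is precisely the case where there are only two independent directions and the constraints decouple, which is why box metrics escape. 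A secondary technical point throughout is the careful management of the dense set: step-isometries are only assumed defined on $V$, so all geometric arguments must be phrased in terms of limits of points of $V$ and the continuity/extension of $f$, and one must verify that the pathological map constructed in the box case and the rigidity forced in the non-box case are both robust to this restriction.
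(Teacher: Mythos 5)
Your forward direction (box metrics fail the truncating property) is essentially the paper's argument: a linear change of coordinates sends the parallelogram to the $L_\infty$ square, and the one-dimensional non-isometric step-isometry is applied independently in the two generator coordinates, with a mild condition (the paper uses integer-distance-freeness of the projected sets) ensuring the construction restricts to a dense $f$-invariant set. That half is fine.

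The converse is where there is a genuine gap, in two places. First, your proposed bootstrap ``from integer scale to all scales by rescaling'' does not work: a step-isometry preserves only $\lfloor d(u,v)\rfloor$, and there is no reason it should preserve $\lfloor d(u,v)/\varepsilon\rfloor$ for any $\varepsilon<1$, so the problem cannot be rescaled to extract finer metric information, and homogeneity of the norm is of no help since $f$ is a fixed map on a fixed set. The paper obtains sub-integer information by a different mechanism: it shows that a step-isometry must \emph{respect} certain lines (send the two half-planes of a line into the two half-planes of an image line); that the respected lines through two nearby points $\vp,\vq$, for three pairwise non-parallel normal directions, generate by iterated intersection all lines $\va\cdot(\vx-\vq)=z_1r+z_2$ with $r=\va\cdot(\vp-\vq)$ and $z_1,z_2\in\Zzz$ --- a set of offsets dense in $\Rrr$ when $r$ is irrational; and that preservation of this grid forces $\va\cdot(\vp-\vq)=\sigma(\va)\cdot(f(\vp)-f(\vq))$ exactly, whence $d_P(\vp,\vq)=d_P(f(\vp),f(\vq))$ by taking the supremum over a generator set. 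Your intuition that three non-parallel directions over-determine the system is correct, but the actual over-determination comes from this iterated-intersection and density-of-$\{z_1r+z_2\}$ argument, which your sketch does not supply. Second, you give no argument for why a step-isometry respects any line at all; this is the technical heart of the paper (an antipodal-points argument in the smooth case, and in the polygonal case a delicate construction certifying a separating line through a given point using only rounded distances). Without both ingredients the converse does not go through; in particular ``differentiating'' is unavailable, since $f$ is defined only on a countable set and a priori has no regularity whatsoever.
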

The proof of the theorem will follow from a series of lemmas presented in Section~2. The proof considers separately the polygonal and smooth cases.

An important corollary of Theorem~\ref{thm:main} shows that, in $(\Rrr^2,d_P)$ where $d_P$ is not a box metric, three well-chosen points completely determine a step-isometry, as they do an isometry. For example, if $d_P$ is the $L_2$-metric, then the restriction on the 3 points is that they should not be collinear. For general norm-derived metrics, the restriction is slightly more complicated, and we need some definitions before we can state the corollary.

A set of three points $\vx,\vy ,\vz\in\Rrr^2$ so that $d(\vx,\vy)\leq d(\vy,\vz)\leq d(\vx,\vz)$ is a {\sl triangular set} if $$d(\vx,\vy)+d(\vy,\vz) >d(\vx,\vz).$$
If $d$ is the $L_2$-metric, then $\vx,\vy,\vz$ forms a triangular set if and only if the points are not collinear; the same is not true for metrics where the unit ball $P$ is a polygon. For example, let $d_\infty$ be the metric derived from the $L_\infty $-norm, and let $\vx =(0,0)$, $\vy=(2,0)$, and $\vz=(1,1)$. Then $d_\infty(\vx,\vz)=d_\infty(\vy,\vz)=1$ and $d_\infty(\vx,\vy)=2$, so $\vx,\vy,\vz$ do not form a triangular set, despite the fact they are not collinear.

Triangular sets exist in any countable dense set and for any metric $d\in \Omega$ which is not a box metric. Namely, fix $\vx,\vy,\vz\in \Rrr^2$ such that the lines through $\vx$ and $\vy$, $\vy$ and $\vz$, and $\vz$ and $\vx$ have slope $\gamma_1$, $\gamma_2$, $\gamma_3$, respectively. If $d$ is a smooth metric, then if $\gamma_1$, $\gamma_2$, $\gamma_3$ are all different, then $\{ \vx,\vy,\vz\}$ is a triangular set. If $d$ is a polygonal metric, and if the lines through the origin with slopes $\gamma_1$, $\gamma_2$, $\gamma_3$ intersect $P$ in three different, non-parallel sides, then $\{ \vx,\vy,\vz\}$ is a triangular set.

It follows immediately from the properties of norm-derived metrics that triangular sets have the following \emph{anchoring property}: if $V$ is a set dense in $\Rrr^2$ and $S\subseteq V$ is a triangular set, and if $f:V\rightarrow V$ an isometry, then the images under $f$ of the points in $S$ completely determine the map $f$. (The argument can be found explicitly in the proof of Theorem~\ref{non2}.) Hence, images of triangular sets completely determine the images of points of $V$ under an isometry. Thus, it follows as corollary from Theorem~\ref{thm:main} that if $d$ is not a box metric, then triangular sets have the anchoring property if only the truncated distances are given.

\begin{corollary}\label{cor:3pts}
Let $V$ be a countable dense set and let $S\subseteq V$ be a triangular set, and let $d\in \Omega$ be a norm-based metric which is not a box metric.
Then for every step-isometry $f:V\rightarrow V$,  the images under $f$ of the points in $S$ completely determine $f$.

\end{corollary}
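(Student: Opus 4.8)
The plan is to derive the corollary from Theorem~\ref{thm:main} together with the anchoring property of triangular sets for true isometries. Since $d$ is not a box metric, Theorem~\ref{thm:main} gives that $d$ has the truncating property, and hence the step-isometry $f\colon V\to V$ is in fact an isometry. It therefore remains to prove the following: if $f\colon V\to V$ is an isometry of a dense set $V\subseteq\Rrr^2$ and $S=\{\vx,\vy,\vz\}\subseteq V$ is a triangular set, then the values $f(\vx)$, $f(\vy)$, $f(\vz)$ determine $f$.

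For this I would first show that every isometry of $(\Rrr^2,d_P)$ is the restriction of an affine map. An isometry is injective and $1$-Lipschitz, so $f$ extends uniquely, by density of $V$, to a distance-preserving map $\overline f\colon\Rrr^2\to\Rrr^2$; this map is continuous and injective, so its image is open by invariance of domain and is also closed (being a complete subspace), whence $\overline f$ is onto. By the Mazur--Ulam theorem a surjective isometry between normed spaces is affine, so $\overline f(\vv)=A\vv+\vb$ for some linear symmetry $A$ of the shape $P$ and some $\vb\in\Rrr^2$. (Alternatively, one can invoke the explicit description of the isometry group of $(\Rrr^2,d_P)$ already used in the proof of Theorem~\ref{non2}.)

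Next I would verify that a triangular set is never collinear, so that $S$ is an affine basis of $\Rrr^2$. Indeed, if $\vx,\vy,\vz$ lay on a common line, then parametrizing that line and using that $d_P$ is homogeneous and translation-invariant, the three pairwise distances are proportional to differences of parameters; since $d_P(\vx,\vz)$ is the largest of the three, $\vy$ lies between $\vx$ and $\vz$, forcing $d_P(\vx,\vy)+d_P(\vy,\vz)=d_P(\vx,\vz)$ and contradicting the strict inequality in the definition of a triangular set. Hence $\vx,\vy,\vz$ are affinely independent, and so the images $\overline f(\vx),\overline f(\vy),\overline f(\vz)$ determine $A$ and $\vb$ (solve $A(\vx-\vy)=\overline f(\vx)-\overline f(\vy)$ and $A(\vx-\vz)=\overline f(\vx)-\overline f(\vz)$ for $A$ on the basis $\{\vx-\vy,\vx-\vz\}$, then set $\vb=\overline f(\vx)-A\vx$), hence determine $\overline f$, hence determine $f=\overline f|_V$. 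As $f$ and $\overline f$ agree on $S$, this shows $f|_S$ determines $f$, which completes the proof.

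The step I expect to require the most care is the passage from ``$f$ is an isometry'' to ``$f$ is the restriction of an affine map'': one has to check that the extension $\overline f$ to all of $\Rrr^2$ is well defined and surjective, so that Mazur--Ulam applies. This is precisely where the assumption that $d$ is \emph{norm-derived} — translation-invariant and homogeneous, hence carrying a genuine normed structure — enters essentially; for an arbitrary metric in $\Omega_2$ neither the extension nor the affineness need hold. Everything else (the reduction via Theorem~\ref{thm:main}, the non-collinearity of triangular sets, and the linear-algebra bookkeeping) is routine.
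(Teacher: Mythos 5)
Your proof is correct, and its opening move coincides with the paper's: invoke Theorem~\ref{thm:main} to upgrade the step-isometry $f$ to an isometry. Where you genuinely diverge is in how the ``anchoring'' step is justified. The paper treats it as essentially immediate, asserting that the points of a triangular set determine the position of every point of $V$ and pointing to the chain construction in the proof of Theorem~\ref{non2} (each new vertex is placed so that its distances to three earlier vertices are realized by three pairwise non-parallel generators, and $V$ is exhausted by such chains). You instead extend $f$ by uniform continuity to a distance-preserving self-map of $(\Rrr^2,\|\cdot\|_P)$, obtain surjectivity via invariance of domain plus closedness of the image, apply Mazur--Ulam to conclude the extension is affine, and note that a triangular set is affinely independent (your collinearity argument is right: on a line the largest of the three distances equals the sum of the other two, violating the strict inequality). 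Your route leans on heavier classical machinery but is self-contained and sidesteps a real subtlety in the paper's sketch: for a single far-away point all three distances to $S$ may be realized by the same generator, so distances to $S$ alone need not pin that point down, and one needs either the chaining through the dense set or, as you do, the global affine structure of isometries; your argument also covers non-surjective $f$ without fuss. One small inaccuracy: your parenthetical alternative claims the proof of Theorem~\ref{non2} contains an explicit description of the isometry group of $(\Rrr^2,d_P)$; it does not, so the Mazur--Ulam argument is the one to keep.
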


\begin{proof} By Theorem~\ref{thm:main}, the step-isometry $f$ is an isometry. The points of $S$ completely determine, therefore, the position of all points in $V$. The proof now follows since the image of a triangular set under an isometry is also a triangular set. \end{proof}
Thus, if the coordinates of the points in $S$ are given, and for all other points in $V$ the truncated distance to each of the points in $S$ is given, then the coordinates of all points in $V$ are determined.

Arguments involving infinite graphs can be used with Corollary \ref{cor:3pts} to prove the following
theorem, which settles the GID for the $L_p$-metrics and polygonal metrics on $\Rrr^2$.

\begin{theorem}\label{non2}
Let  $d$ be a metric in $\Omega$. Then $d$ is predictable if and only if it is a box metric.
\end{theorem}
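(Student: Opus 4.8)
The plan is to prove the two implications of the equivalence separately: for the direction ``box metric $\Rightarrow$ predictable'' I would reduce to the already-known case of the $L_\infty$-metric by a linear change of coordinates, and for ``not a box metric $\Rightarrow$ not predictable'' I would combine Theorem~\ref{thm:main} and Corollary~\ref{cor:3pts}, which bound the number of candidate isomorphisms, with an elementary independence argument that rules out each candidate.

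Suppose first that $d=d_P$ is a box metric, so $P$ is an origin-symmetric parallelogram. Since the unit ball of $\|\cdot\|_\infty$ is the square $[-1,1]^2$, there is an invertible linear map $T:\Rrr^2\to\Rrr^2$ with $T(P)=[-1,1]^2$; from the definition of $\|\cdot\|_P$ one checks directly that $\|\vz\|_P=\|T\vz\|_\infty$ for every $\vz\in\Rrr^2$, so $T$ is an isometry of $(\Rrr^2,d_P)$ onto $(\Rrr^2,d_\infty)$, and in particular $d_P(\vu,\vv)<1$ if and only if $d_\infty(T\vu,T\vv)<1$. By \cite{bon2} the metric $d_\infty$ on $\Rrr^2$ is predictable; let $V^\ast$ be a countable dense set witnessing this and put $V=T^{-1}(V^\ast)$, which is dense in $\Rrr^2$. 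Because $T$ matches the edge-candidate pairs of the two spaces, a sample of $\mathrm{LARG}(V,p)$ in the metric $d_P$ has exactly the law of the $T$-image of a sample of $\mathrm{LARG}(V^\ast,p)$ in $d_\infty$. Hence for every $p\in(0,1)$ two independent samples of $\mathrm{LARG}(V,p)$ are almost surely isomorphic (transport the isomorphism on the $d_\infty$ side back through $T$), so $d_P$ is predictable.

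Now suppose $d\in\Omega$ is not a box metric; I must show $d$ is not predictable, i.e.\ that for every countable dense $V$ and every $p\in(0,1)$ two independent copies $G_1,G_2$ of $\mathrm{LARG}(V,p)$ are non-isomorphic with positive probability, and I will in fact show $\Ppp(G_1\cong G_2)=0$. Fix $V$ and $p$, and fix a triangular set $S=\{\vx,\vy,\vz\}\subseteq V$, which exists by the remarks preceding Corollary~\ref{cor:3pts}. By Lemma~\ref{newlemmm}, with probability $1$ every isomorphism $V\to V$ between $G_1$ and $G_2$ is a step-isometry; since $d$ is not a box metric, Theorem~\ref{thm:main} shows any such $f$ is an isometry, and Corollary~\ref{cor:3pts} shows it is determined by the triple $(f(\vx),f(\vy),f(\vz))\in V^3$. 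Thus the collection $\mathcal{F}$ of bijective step-isometries $V\to V$ injects into $V^3$ and is countable, and with probability $1$ every isomorphism between $G_1$ and $G_2$ lies in $\mathcal{F}$.

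It remains to show that each fixed $f\in\mathcal{F}$ is an isomorphism $G_1\to G_2$ with probability $0$; then $\Ppp(G_1\cong G_2)\le\sum_{f\in\mathcal{F}}\Ppp(f\text{ is an isomorphism})=0$ by countable subadditivity. Since $f$ is a step-isometry and, for distinct points, $\lfloor d(\vu,\vv)\rfloor=0$ exactly when $d(\vu,\vv)<1$, both $f$ and $f^{-1}$ send pairs at distance $<1$ to pairs at distance $<1$; hence $f$ is an isomorphism precisely when, for every pair $\{\vu,\vv\}$ with $d(\vu,\vv)<1$, the edge $\{\vu,\vv\}$ is present in $G_1$ if and only if $\{f(\vu),f(\vv)\}$ is present in $G_2$. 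Because $V$ is dense and infinite there are infinitely many pairs $\{\vu_i,\vv_i\}$ of distinct points of $V$ with $d(\vu_i,\vv_i)<1$, and the images $\{f(\vu_i),f(\vv_i)\}$ are also distinct since $f$ is a bijection; as $G_1$ and $G_2$ are independent, the agreement events $E_i=\bigl[\{\vu_i,\vv_i\}\in E(G_1)\Leftrightarrow\{f(\vu_i),f(\vv_i)\}\in E(G_2)\bigr]$ are mutually independent, each of probability $p^2+(1-p)^2<1$, so $\Ppp(f\text{ is an isomorphism})\le\Ppp\bigl(\bigcap_i E_i\bigr)=0$. Combining the two directions proves the theorem. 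The crux of the argument is the countability of $\mathcal{F}$: this is exactly where Theorem~\ref{thm:main} and Corollary~\ref{cor:3pts} are indispensable, and it is precisely what fails for box metrics, where uncountably many step-isometries arise (as the one-dimensional example in Section~\ref{sec:trunc} illustrates), so that the per-map probability-$0$ bound can no longer be summed.
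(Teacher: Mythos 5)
Your proof is correct, and while the ``box metric $\Rightarrow$ predictable'' direction coincides with the paper's argument (a linear map carrying the parallelogram to $[-1,1]^2$ reduces everything to the known predictability of the $L_\infty$-metric), your argument for the converse takes a genuinely different route. The paper fixes a ``good enumeration'' $v_1,v_2,\dots$ of $V$, lets $A_n$ be the event that some partial isomorphism defined on $V_n$ exists with the anchoring triple mapped into $V_n$, bounds $\Ppp(A_n)$ by a union over the finitely many possible images of that triple times $(p^*)^{n-1}$ via compatibility of the consecutive pairs $(v_i,v_{i+1})$, and passes to the limit through the events $A_n^*=\bigcap_{\nu\ge n}A_\nu$. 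You instead observe that Theorem~\ref{thm:main} and Corollary~\ref{cor:3pts} make the set $\mathcal{F}$ of bijective step-isometries of $V$ countable (it injects into $V^3$ via the images of one fixed triangular set), that Lemma~\ref{newlemmm} puts every isomorphism in $\mathcal{F}$ almost surely, and that each fixed $f\in\mathcal{F}$ is an isomorphism with probability $0$ because infinitely many distinct pairs at distance less than $1$ give mutually independent agreement events of probability $p^2+(1-p)^2<1$; a countable union bound finishes. Your decomposition is cleaner and dispenses with the good-enumeration construction entirely; what the paper's finitary approach buys is that it never needs to invoke countability of the full set of global candidate maps, only the finitely many anchoring choices within each $V_n$ --- but since countability of $\mathcal{F}$ is an immediate consequence of the anchoring corollary, your shortcut is legitimate. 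Both arguments ultimately rest on the same three inputs: Lemma~\ref{newlemmm}, Theorem~\ref{thm:main}, and the anchoring property of triangular sets, and your closing remark correctly identifies why the summation over candidates breaks down for box metrics.
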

We defer the proof to the end of Section~2. We conjecture that analogous results as in Theorem~\ref{non2} apply to higher dimensions and other norm-derived metrics. We will consider these cases in future work.

The theorem classifies all metrics in $\Omega$ in terms of predictability. We remind the reader that the definition of predictability  involves an existential quantifier: the definitions involve the behaviour of $LARG(V,p)$ on a dense subset $V$, where  $V$ must have special properties.  In fact, however, the conditions on $V$  needed to prove the theorem are rather mild. In fact, if $V$ is a random dense set, chosen according to a ``reasonable'' model, then $V$ will possess the desired properties with probability 1.  For example $V$ can be the countable union of sets each chosen from $\Rrr^2$ according to a Poisson point process. Another example is the extension to $\Rrr^2$ of the random dense set model proposed in \cite{t} for $(0,1)$, by first taking the union of an infinite number of such sets for the intervals $(z,z+1)$ where $z\in \Zzz$, and then taking the Cartesian product of two of such unions.



\section{Outline of the proof of the main results}\label{lemmass}

In this section, we will sketch the proofs of Theorems \ref{thm:main} and Theorem~\ref{non2}, and give the sequence of lemmas needed to arrive at the proofs.

\subsection{Norm-derived metrics}\label{sec:norm}

Before we proceed to an outline of the proof of Theorem~\ref{thm:main}, we first introduce some notation and concepts that apply to all norm-derived metrics, and will be helpful in the rest of the paper.  Note first that it follows directly from the definition that for any point-symmetric convex set $P$ with boundary $\mathcal{B}(P)$, and for any $\vx\in\Rrr^2$, we have that
\begin{equation*}
\frac{ ||\vx||_2}{\sup_{\vb\in \mathcal{B}(P)}||\vb||_2}\leq ||\vx ||_P\leq \frac{ ||\vx||_2}{\inf_{\vb\in \mathcal{B}(P)}||\vb||_2}.
\end{equation*}

For a polygonal metric $d_P\in \Omega$, we can use the description of the polygon to compute the norm and distance. We introduce specific notation for this case which we will use throughout.
Let $P$ be a point-symmetric (but possible non-regular) polygon in $\Rrr^2$ whose boundaries are formed by lines with normals $ \mathbf{a}_{1},\mathbf{a}_{2},\ldots ,\mathbf{a}_{k} $.   See Figure \ref{fig1} for an example.  Moreover, assume that the vectors $\va_i$ are scaled so that for each point $\vp$ on the boundary of $P$, $|\va_i\cdot \vp |= 1$ for some $1\leq i\leq k$. In this case, we can describe  $P$ to be the set:
\[
P=\{\mathbf{x}:\mbox{for all }1\leq i\leq k,\mbox{ }-1\leq \mathbf{a}
_{i}\cdot \mathbf{x}\leq 1\}.
\]

\begin{figure} [h!]
\begin{center}
\epsfig{figure=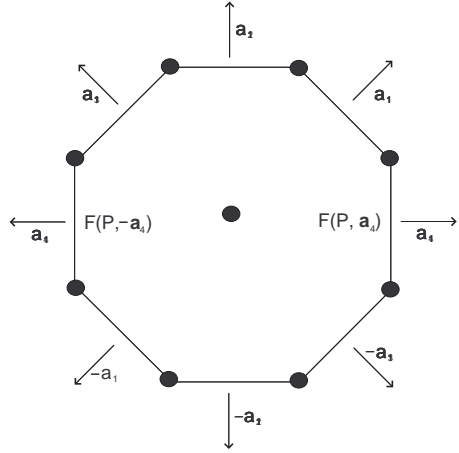,width=2.5in,height=2.5in}
\caption{The polygon $P$.}\label{fig1}
\end{center}
\end{figure}

Define the set of {\em generators} of $P$, written $\mathcal{G}_P$, to be the set of vectors that define its sides; in particular, we have that
\[
\mathcal{G}_P=\{ \mathbf{a}_i:1\leq i\leq k\}\cup \{-\mathbf{a}_i:1\leq i\leq k\}.
\]
Observe that an alternative description is $P=\{ \mathbf{x}: \mbox{for all }\mathbf{a}\in \mathcal{G}_P,\mbox{ }0\leq \mathbf{a}\cdot \mathbf{x}\leq 1\},$ and for each $\vx\in\Rrr ^2$, we have that
\[
\|\vx\|_P = \max_{\va\in \mathcal{G}_P} \{\va \cdot \vx \}.
\]
Namely, let $\mathbf{b}$ be the point on $P$ where the line segment from the origin to $\vx$ intersects $P$, and let $\va\in \mathcal{G}_P$ be so that $\vb$ is part of the face of $P$ with normal $\va$. Then $\va \cdot \vb =1$, and $\|\vx\|_P =\frac{||\vx||_2}{||\mathbf{b}||_2}=\frac{\va\cdot \vx}{\va\cdot \mathbf{b}}=\va\cdot \vx$.

Next, let $\vz,\vz^*\in\Rrr^2$, and assume that $d_P(\vz,\vz^*)=\va\cdot (\vz-\vz^*)$ for $\va\in\mathcal{G}$. In other words, the distance from $\vz$ to $\vz^*$ is determined by $\va$. This means that, if $P$ is centered at $\vz$, then the line segment $\vz\vz^*$  intersects $P$ in $F(P,\va)$. Moreover, the same is true if $P$ is enlarged by a factor $M$ (as long as it remains centered at $\vz$.) Also, if $P_M$ is a version of $P$ enlarged by a factor $M$ and centered at $\vz$, and so that $\vz^*$ lies on the boundary of $P$, then  $\vz^*$ will lie on the enlarged version of $F(P_M,\va)$. Moreover, $M=d_P(\vz,\vz^*)$, and $P_M$ is the ball of radius $M$ around $\vz$. If $P$ is centered at $\vz^*$, then the line segment $\vz\vz^*$ will intersect $P$ in $F(P,-\va )$. See also Figure~\ref{fig4}.
\begin{figure} [h!]
\begin{center}
\epsfig{figure=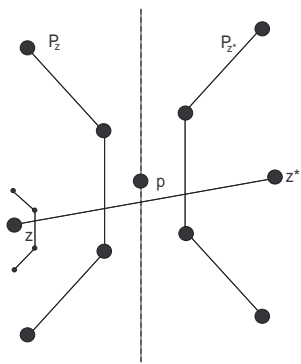,width=2in,height=2in}
\caption{Polygons $P_{\vz}$ and $P_{\vz^*}$ are similar to $P$, and centered at $\vz$, $\vz^*$, respectively. The line segment connecting $\mathbf{z}$ and $\mathbf{z}^*$ intersects $P_{
\mathbf{z}}$ and $P_{\vz^{* }}$ in parallel faces.}\label{fig4}
\end{center}
\end{figure}

For $\va\in\mathcal{G}_P$, let $F(P,\va)$ be the face of $P$ with normal $\mathbf{a},$ where $\mathbf{a}$ is pointing away from the centre of $P.$ Define $F(P,-\va)$ analogously. See Figure \ref{fig1}.

We now consider the case where the metric is smooth.
In that case,  for each point $\vp$ on the boundary of $P$, there exists a vector $\va\in\Rrr^2$ so that $\va\cdot \vp=1$. Precisely, $\va$ will be the vector perpendicular to the boundary of $P$ at $\vp$, scaled appropriately.
Now let $\mathcal{G}_P$ be a countable dense subset of such vectors in $\Rrr^2$.  In particular, we require that for each  point $\vp$ on the boundary of $P$ and all $\epsilon >0$,  exist $\va\in \mathcal{G}_P$ so that $\va\cdot \vp\geq 1-\epsilon$. In other words, the angle between $\va$ and the tangent to the boundary of $P$ at $\vp$ can be made arbitrarily small.
Let $$P^*=\{ \mathbf{x}: \mbox{for all }\mathbf{a}\in {\mathcal{G}^*}_P,\mbox{ } \mathbf{a}\cdot \mathbf{x}\leq 1\}.$$
Then the closure of $P^*$ is $P$. Moreover, we have that for all $\vx\in \Rrr^2$,
\[
\|\vx\|_P = \sup_{\va\in \mathcal{G}_P} \va\cdot \vx.
\]
Such a set $\mathcal{G}_P$ will again be called a   {\em generator set} for $P$.
Note that every convex set has a countable generator set, while only polygons have finite generator sets.

\subsection{Proof of Theorem \ref{thm:main}}

The proof of Theorem \ref{thm:main} is outlined here through a series of lemmas. The proofs of these technical lemmas can all be found in Sections \ref{sec:proofs1} and \ref{sec:proofs2}. Their proofs are based on the concept of respected lines, which we now introduce. Let
$V\subseteq \mathbb{R}^{2}$ and $f:V\rightarrow \mathbb{R}^{2}$ be an
injective map. 
Let $r\in \Rrr$,  $\va \in \Rrr^2$, and let $\ell$ be the line defined by the equation $\va \cdot \vx =r$.
The map $f$ \emph{respects} the line $\ell$ if there exists a line $\ell'$ with equation $\va ' \cdot\vx=r'$ for some $r'\in\Rrr$, $\va '\in\Rrr^2$ such that for all $\vv \in V$
\begin{equation}
\label{eq:consistent}
\mathbf{a}\cdot\vv < r \mbox{ implies that } \va ' \cdot f(\vv)<  r',\mbox{ and }\mathbf{a}\cdot \vv > r \mbox{ implies that } \va ' \cdot f(\vv )>   r'.
\end{equation}
The line $\ell'$ will be called the \emph{image} of $\ell$ under the \emph{line map}. Note that a function $f$ respects a line $\ell$ if the half-spaces on both sides of $\ell$ are mapped to half-spaces separated by another line, which can be then be considered the image of $\ell$. For example, if $f$ respects a vertical line $\ell$, then points to the left (right) of $\ell$ are mapped to the right (left) of the image of $\ell.$  Further, note that isometries respect lines. An \emph{integer parallel} of line $\ell$ is a line $\ell'$ parallel to $\ell$ so that $d(\ell,\ell')\in \Zzz$ (the distance between two lines is defined in the obvious way).

The first lemma establishes some straightforward consequences of the definitions.
\begin{lemma}
\label{lem:consistent}
Let $V$ be a countable dense set in $\Rrr^2$ and let $f:V\rightarrow V$ be a bijection. Suppose $f$ respects the line $\ell$, and $\ell'$ is an image of $\ell$ under the line map.
\begin{itemize}
\item[($i$)] If $\ell$ contains a point $\vv\in V$, then $\ell'$ contains $f(\vv)$.
\item[($ii$)] The line $\ell'$ is the unique image of $\ell$ under the line map. That is, $\ell'$ is the only line which satisfies (\ref{eq:consistent}).
\item[($iii$)] Suppose $f$ respects the line $\hat{\ell}$, and $\hat{\ell'}$ is the image of $\hat{\ell}$ under the line map. Then $\ell$ and $\hat{\ell}$ are parallel if and only if $\ell'$ and $\hat{\ell}'$ are parallel.
\item[($iv$)] If $f$ is a step-isometry and $\ell $ is a line respected by $f$, then all integer parallels of $\ell $ must also be respected.
\end{itemize}
\end{lemma}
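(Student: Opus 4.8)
The plan is to prove the four parts of Lemma~\ref{lem:consistent} in the order $(i)$, $(ii)$, $(iii)$, $(iv)$, using only the definition of ``respects'' and basic facts about dense sets and half-planes.

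For $(i)$, suppose $\vv\in V$ lies on $\ell$, so $\va\cdot\vv=r$. I would approximate $\vv$ from both sides: since $V$ is dense, there are sequences of points $\vu_n,\vw_n\in V$ with $\va\cdot\vu_n<r$, $\va\cdot\vw_n>r$, and $\vu_n,\vw_n\to\vv$. Applying \eqref{eq:consistent}, $f(\vu_n)$ lies in the open half-plane $\va'\cdot\vx<r'$ and $f(\vw_n)$ in $\va'\cdot\vx>r'$. The point here is that a bijection $f:V\to V$ need not be continuous, so I cannot simply pass to limits on the image side directly; instead I argue by contradiction. If $f(\vv)$ were on the strict side $\va'\cdot f(\vv)>r'$, then for the preimage statement I would need a point of $V$ near $\vv$ but it is cleaner to note that $f$ is a bijection of $V$, and every image point lies on one of the two open half-planes or on $\ell'$; a symmetry/cardinality or local-density argument on both sides of $\ell$ forces $f(\vv)$ onto $\ell'$. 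Concretely: the points of $V$ on the side $\va\cdot\vx<r$ that are arbitrarily close to $\vv$ must map to points with $\va'\cdot f(\vx)<r'$ arbitrarily close to $f(\vv)$ only if $f$ is continuous, which we do not have, so the safe route is: since both half-planes of $\ell'$ are nonempty images and $f$ is onto $V$, and $\ell'$ must contain points of $V$ if $\ell$ does (else one could perturb $\ell'$ slightly and still satisfy \eqref{eq:consistent}, contradicting the forthcoming uniqueness), we get $f(\vv)\in\ell'$. I expect the correct argument actually runs through $(ii)$: I will prove $(i)$ and $(ii)$ together, deriving uniqueness first.

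For $(ii)$, suppose $\ell'$ and $\ell''$ both satisfy \eqref{eq:consistent}. Then $f$ maps $V\cap\{\va\cdot\vx<r\}$ into $\{\va'\cdot\vx<r'\}\cap\{\va''\cdot\vx<r''\}$ and similarly on the other side. Because $V$ is dense and $f$ is a bijection, the image $f(V)=V$ is dense in $\Rrr^2$; hence the closure of $f(V\cap\{\va\cdot\vx<r\})$ is exactly the closed half-plane $\{\va'\cdot\vx\le r'\}$ and also $\{\va''\cdot\vx\le r''\}$ — this is because $V\cap\{\va\cdot\vx<r\}$ is dense in that open half-plane, $f$ of it lies in both closed half-planes, and $f(V)$ is dense in $\Rrr^2$ so it cannot miss a positive-measure region of either. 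Two closed half-planes with the same closure of a dense set must coincide, so $\ell'=\ell''$. Then $(i)$ follows: $f(\vv)$ must lie in the closure of the image of the points just below $\ell$ and also the closure of the image of points just above, i.e.\ in $\{\va'\cdot\vx\le r'\}\cap\{\va'\cdot\vx\ge r'\}=\ell'$.

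For $(iii)$, if $\ell\parallel\hat\ell$ then the four open half-planes they bound are nested/disjoint in a linear order; \eqref{eq:consistent} forces the images to respect the same containments, so $\ell'$ and $\hat\ell'$ bound half-planes that are nested the same way, which is possible only if $\ell'\parallel\hat\ell'$ (two non-parallel lines cannot have nested half-planes). The converse is identical by symmetry of the roles. For $(iv)$, let $\ell$ be respected with image $\ell'$, and let $\ell_k$ be an integer parallel of $\ell$ at distance $k\in\Zzz$. Take any $\vv\in V\cap\ell$ (exists: perturb, or use density on a line is not automatic — instead pick $\vv\in V$ and use that $f$ step-isometry means $\lfloor d(\vv,\vu)\rfloor$ is preserved). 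Here I would use the step-isometry hypothesis directly: for $\vv\in V$ near $\ell$ and $\vu\in V$ near $\ell_k$ on the appropriate sides, the truncated distance $\lfloor d(\vv,\vu)\rfloor$ equals $\lfloor d(f(\vv),f(\vu))\rfloor$, and since $\ell$ is respected, $f(\vu)$ and $f(\vv)$ lie on prescribed sides of $\ell'$; letting the points approach the lines, the truncated distances pin down that $f$ sends the open slabs between consecutive integer parallels of $\ell$ to the open slabs between consecutive integer parallels of $\ell'$, which is exactly the statement that each integer parallel $\ell_k$ is respected with image the integer parallel of $\ell'$ at the same signed distance. The main obstacle throughout is the lack of continuity of $f$: every ``pass to the limit'' must be replaced by a density-of-the-image argument (using that $f(V)=V$ is dense in $\Rrr^2$) or by invoking the step-isometry condition to control distances; once $(ii)$ is set up via closures of dense sets, the rest is bookkeeping about half-plane containments.
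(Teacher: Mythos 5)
Your treatment of parts ($ii$) and ($iii$) is essentially the paper's argument: if two distinct candidate image lines (or non-parallel images of parallel lines) existed, some nonempty open region would be forced to contain no images of points of $V$, contradicting that $f(V)=V$ is dense. (In ($iii$), the step ``(\ref{eq:consistent}) forces the images to respect the same containments'' itself secretly needs that density argument, since containment of the image \emph{point sets} does not by itself yield containment of half-planes; the paper's version counts regions -- four for non-parallel lines versus three for parallel ones -- and exhibits an empty one.) The genuine problem in the first half is ($i$), where both of your routes fail. The parenthetical claim that if $\ell'$ contained no point of $V$ then ``one could perturb $\ell'$ slightly and still satisfy (\ref{eq:consistent})'' is false: moving $\ell'$ to $\va'\cdot\vx=r'-\epsilon$ breaks the condition, because points of $V$ in the strip $r'-\epsilon<\va'\cdot\vx<r'$ have preimages that cannot lie strictly to the right of $\ell$, so some point to the left of $\ell$ is mapped into that strip. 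Your fallback, ``$f(\vv)$ must lie in the closure of the image of the points just below $\ell$ and also \dots just above,'' is exactly the continuity assumption you correctly flagged as unavailable at the outset: nothing constrains the image of a point \emph{on} $\ell$ to lie in the closure of the images of nearby points. What surjectivity actually yields is only $V\cap\ell'\subseteq f(V\cap\ell)$, which does not place $f(\vv)$ on $\ell'$. (The paper's own proof of ($i$) is a one-line assertion, but your substitute justifications are not valid repairs of it.)

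For ($iv$) you have named the right ingredients (floor-preservation and the fact that $\ell$ is respected) but not assembled a proof: ``letting the points approach the lines, the truncated distances pin down that $f$ sends the open slabs \dots to the open slabs'' restates the conclusion via precisely the limiting argument you disclaimed, and it never uses a concrete pair of points whose floored distance changes. The missing step, which is the paper's, is the following pairing. Suppose $\vw\in V$ satisfies $\va\cdot\vw<r+z$ while $\va'\cdot f(\vw)>r'+z$. Since the strip between $\ell$ and its integer parallel has width $z$, density lets you choose $\vu\in V$ with $d(\vu,\vw)<z$ and $\va\cdot\vu<r$. Then $f(\vu)$ lies strictly to the left of $\ell'$ and $f(\vw)$ strictly to the right of the line $\va'\cdot\vx=r'+z$, so they are separated by a closed strip of width $z$ and hence $d(f(\vu),f(\vw))>z$. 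This gives $\lfloor d(f(\vu),f(\vw))\rfloor\ge z>\lfloor d(\vu,\vw)\rfloor$, contradicting the step-isometry. Without this (or an equivalent) construction of a witness across $\ell$ paired with the violating point, part ($iv$) is not proved.
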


The first step in the proof of the main result is to establish that there are some lines that must be respected by any step-isometry on a countable dense set. We consider the smooth case first.

\begin{lemma}
\label{lem:exist non-poly}
Consider $\Rrr^2$ equipped with a smooth metric $d\in\Omega$.
Let $V$ be a countable dense set in $\Rrr^2$, and let $f:V\rightarrow V$ be a step-isometry. Let $\vv_1,\vv_2\in V$, and let $\ell $ be the line through $\vv_1$ and $\vv_2$.  Then $\ell $ must be respected, and its image is the line $\ell'$ through $f(\vv_1)$ and $f(\vv_2)$. Moreover, any line through a point $\vv_3\in V$ and parallel to $\ell$ must be respected, and its image is the line through $f(\vv_3)$ and parallel to $\ell'$.
\end{lemma}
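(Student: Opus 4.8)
The plan is to show that for a smooth metric $d\in\Omega$, any step-isometry $f$ on a countable dense set $V$ respects every line determined by two points of $V$, and then to identify the image line. The starting observation is that in the smooth case, the unit ball $P$ is strictly convex (its boundary contains no line segments), so for any two points $\vz,\vz^*$ there is a \emph{unique} generator $\va\in\mathcal G_P$ with $d_P(\vz,\vz^*)=\va\cdot(\vz-\vz^*)$, and the ball of radius $M$ around $\vz$ touches the scaled face $F(P_M,\va)$ at a single point. Equivalently, the ``directions'' of points at a given distance from $\vz$ are spread around a strictly convex curve. I will exploit this strict convexity to pin down, for a fixed $\vv_1\in V$ and integer $m$, the geometry of the annulus $\{\vw: m\le d(\vv_1,\vw)<m+1\}\cap V$ and how it must sit around $f(\vv_1)$.

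First I would set up coordinates so that $\ell$, the line through $\vv_1$ and $\vv_2$, is a fixed line, and consider the two open half-planes $H^+,H^-$ bounded by $\ell$. The key step is to produce, for points $\vv\in V$ near $\ell$, a combinatorial certificate, expressible purely in terms of truncated distances among finitely many points of $V$, that detects on which side of $\ell$ the point $\vv$ lies; then $f$, preserving all truncated distances, must send $H^+\cap V$ and $H^-\cap V$ into the two sides of some line $\ell'$. Concretely: take auxiliary points $\vu_1,\vu_2,\dots\in V$ dense along $\ell$ (using density of $V$) and on $\ell$ itself whenever possible, and points at controlled truncated distances from the $\vu_i$; by strict convexity of $P$ one can arrange a configuration whose pattern of \emph{floor}-distances is realizable only by point sets that are ``flat'' in a precise sense, which forces the image of that configuration to again lie along a line. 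I would first treat the case where $V$ happens to contain many points exactly on $\ell$ (using part~($i$) of Lemma~\ref{lem:consistent} to conclude the images lie on $\ell'$), and then remove that assumption by an approximation/limiting argument: the image of a sequence of respected lines converging to $\ell$ converges to a line, which must be the image of $\ell$, and part~($ii$) of Lemma~\ref{lem:consistent} guarantees uniqueness. Once $\ell$ is known to be respected with image $\ell'$, parts ($i$) and ($iii$) of Lemma~\ref{lem:consistent} identify $\ell'$ as the line through $f(\vv_1)$ and $f(\vv_2)$, and give that lines parallel to $\ell$ map to lines parallel to $\ell'$; applying this to a line through $\vv_3\in V$ parallel to $\ell$, part~($i$) places its image through $f(\vv_3)$.

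The main obstacle I anticipate is the first step: constructing, in a genuinely metric (coordinate-free) way, a finite configuration of points whose truncated pairwise distances force near-collinearity, so that a step-isometry is compelled to respect $\ell$. This is where smoothness (strict convexity) is essential and where the $L_\infty$ and general box-metric counterexamples break down — in those cases the unit ball has flat faces, so a whole two-dimensional region of points can share the same truncated distance profile relative to a line, and collinearity is not detectable from floors of distances. I would therefore spend most of the effort quantifying how strict convexity of the smooth shape $P$ converts ``same truncated distances'' into ``asymptotically collinear'', letting the density of $V$ push the approximation error below $1$ so it becomes invisible to the floor function; the remaining steps are then bookkeeping with Lemma~\ref{lem:consistent}.
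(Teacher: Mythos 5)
Your high-level strategy matches the paper's in one respect: both use truncated distances together with strict convexity of the smooth shape to certify that certain configurations are nearly collinear with $\ell$, and both finish with the bookkeeping from Lemma~\ref{lem:consistent}. Your idea of a floor-distance pattern ``realizable only by flat point sets'' is realized in the paper as a concrete claim: any $\vx$ with $\lfloor d(\vx,\vv_1)\rfloor=\lfloor d(\vx,\vv_2)\rfloor+\lfloor d(\vv_1,\vv_2)\rfloor$ satisfies $d(\vx,\ell)\leq 1$, and the paper uses pairs of such ``antipodal'' points sitting far out along $\ell$ on opposite ends.

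However, there is a genuine gap at exactly the point you flag as the main obstacle: you never construct the certificate that detects on which \emph{side} of $\ell$ a given point lies, and near-collinearity of an auxiliary configuration cannot by itself do this. The paper's mechanism is absent from your plan. Having placed antipodal points $\vx,\vy$ within distance $1$ of $\ell$ and far from $\vv_1,\vv_2$, it picks a further point $\vp\in V$ and a large integer $M$ with $\lfloor d(\vp,\vx)\rfloor,\lfloor d(\vp,\vy)\rfloor<M$, $\lfloor d(\vp,\vv_i)\rfloor>M$, and, for the two points $\vu,\vw$ witnessing the alleged failure of respect, $\lfloor d(\vp,\vu)\rfloor<M$ and $\lfloor d(\vp,\vw)\rfloor>M$. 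The boundary of $B_M(\vp)$ then separates $\vu$ from $\vw$; smoothness makes that boundary locally almost straight and almost parallel to $\ell$ for large $M$, so it cannot separate two points on the same side of $\ell$. Crucially the configuration is chosen in the \emph{image} (where $\vu'$ and $\vw'$ really are on opposite sides of $\ell'$, so such a $\vp'$ exists by density) and is pulled back through $f^{-1}$, which is also a step-isometry. Without some such side-detecting device, and without the pull-back step, your plan does not close. Your fallback (assume many points of $V$ lie on $\ell$, then take limits of respected lines converging to $\ell$) is also circular as stated: this lemma is precisely what supplies respected lines in the first place, so you have no stock of respected lines converging to $\ell$ whose images you could pass to the limit, and Lemma~\ref{lem:consistent}($ii$) gives uniqueness of an image only once a line is already known to be respected.
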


A similar lemma for the polygonal case involves a more technical argument. The proof of this lemma can be found in Section~\ref{sec:proofs2}.
\begin{lemma}
\label{lem:exist poly}
Consider $\Rrr^2$ equipped by a polygonal metric $d_P\in\Omega$.
Let $V$ be a countable dense set in $\Rrr^2$ and let $f:V\rightarrow V$ be a step-isometry. Then any line through a point  $\vv\in V$ and parallel to one of the sides of $P$ must be respected, and its image is a line through $f(\vv)$ parallel to one of the sides of $P$.
\end{lemma}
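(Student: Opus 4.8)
\textbf{Proof proposal for Lemma~\ref{lem:exist poly}.}

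The plan is to exploit the polygonal structure by showing that lines parallel to a side of $P$ are exactly the lines along which the step-isometry behaves rigidly enough to be pinned down. Fix $\vv\in V$ and a generator $\va\in\mathcal{G}_P$, and let $\ell$ be the line through $\vv$ with equation $\va\cdot\vx = \va\cdot\vv$. The first step is to recall, from the discussion preceding Figure~\ref{fig4}, that $d_P(\vz,\vz^*)=\va\cdot(\vz-\vz^*)$ precisely when the segment $\vz\vz^*$ meets the $\va$-centered copy of $P$ in the face $F(P,\va)$; equivalently, the displacement $\vz^*-\vz$ points into the cone normal to the side $F(P,\va)$. Call such pairs \emph{$\va$-aligned}. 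The key geometric observation is that for a pair of points whose $d_P$-distance is realized by $\va$ and which are large (distance $>1$), the truncated distance $\lfloor d_P(\vz,\vz^*)\rfloor$ counts exactly how many integer parallels of $F(P,\va)$ (i.e.\ translates of $\ell$ in the $\va$-direction by integer amounts) lie strictly between $\vz$ and $\vz^*$; this is the discrete shadow of the flat side.

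Next I would set up a \emph{density/limiting argument} to locate the image line. Because $V$ is dense, I can choose sequences of points of $V$ on either side of $\ell$, arbitrarily close to $\ell$, forming $\va$-aligned pairs across it, and use the step-isometry condition to constrain where their images go relative to each other. Concretely: points $\vv',\vv''\in V$ lying within $\epsilon$ of $\ell$ on opposite sides, and $\va$-aligned, satisfy $d_P(\vv',\vv'')<\epsilon$ small but positive, hence $\lfloor d_P(f(\vv'),f(\vv''))\rfloor \in\{0\}$ (for $\epsilon<1$), while adding a point $\vw$ at $\va$-distance just under an integer $m$ from $\vv'$ on the far side forces $\lfloor d_P(f(\vw),f(\vv'))\rfloor$ and $\lfloor d_P(f(\vw),f(\vv''))\rfloor$ to agree or differ by one depending on which side of the corresponding parallel $f(\vw)$ falls. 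Iterating this over a dense family of parallels $\{\va\cdot\vx = \va\cdot\vv + k : k\in\Zzz\}$ shows that $f$ maps each of these integer-parallel lines consistently to a family of parallel lines — this is where part~(iv) of Lemma~\ref{lem:consistent} is invoked once a single line in the family is shown to be respected. The image line $\ell'$ must then itself be parallel to a side of $P$: if it were not, the face of $P$ it "sees" would not be flat, and one could produce two points on the same side of $\ell'$ at the same truncated distance from a third point that had different truncated distances in the domain, contradicting the step-isometry property.

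The main obstacle I anticipate is the clean verification that the image family is \emph{equally spaced} and parallel to a side of $P$, rather than merely "respected" in the weak sense of Lemma~\ref{lem:consistent}. In the smooth case (Lemma~\ref{lem:exist non-poly}) curvature forces uniqueness of directions; in the polygonal case one has finitely many privileged directions (the sides of $P$) and finitely many "degenerate" directions (diagonals like the $(1,1)$-example in the text) where truncated distance loses information, so the argument must rule out that a side-parallel line maps to a degenerate-direction line. I expect to handle this by a counting argument on integer parallels: along a side-parallel direction the map $k\mapsto \lfloor d_P(\cdot,\cdot)\rfloor$ is an exact step function with unit jumps, and this rigidity of the "staircase" is preserved under $f$ only if the image direction is again side-parallel with the matching side length. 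The box-metric exception of Theorem~\ref{thm:main} is exactly the configuration where two side directions are parallel and the staircases in the two directions become independent, which is why that case is excluded from the conclusions but not from this lemma — here we only claim the image is \emph{some} side-parallel line, which remains true even for box metrics. I would organize the write-up as: (1) reduce to a single generator $\va$ and the associated staircase; (2) prove a single side-parallel line through a point of $V$ is respected via the density argument; (3) identify its image direction as side-parallel by the counting/rigidity argument; (4) invoke Lemma~\ref{lem:consistent}(iii)–(iv) to extend to all parallels and conclude.
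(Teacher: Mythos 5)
Your proposal correctly identifies what must be transferred through the step-isometry (a ``which side of $\ell$'' predicate expressible purely in truncated distances) and correctly notes the danger of degenerate directions, but it has a genuine gap at exactly the point you flag as the ``main obstacle.'' The mechanism you propose --- a far point $\vw$ at $\va$-distance just under an integer $m$ from points straddling $\ell$ --- only tells you that images of points on the two sides of $\ell$ land inside versus outside the ball $B_m(f(\vw))$. The boundary of that ball is a scaled copy of the polygon $P$, not a line; so this argument pins the images down to opposite sides of a \emph{polygonal} boundary, and gives a line parallel to a side of $P$ only if you can guarantee that $f(\vv)$ and the images of the nearby points land near a \emph{face} of $\partial B_m(f(\vw))$ rather than near a vertex. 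Nothing in your proposal rules out the vertex case, and in that case the ``staircase with unit jumps'' is not an exact step function in any single direction, so step (3) of your outline (``identify the image direction as side-parallel by the counting/rigidity argument'') does not go through as stated; it is asserted rather than proved, and it is precisely the hard part of the lemma.

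The paper's proof closes this gap by constructing, in Lemma~\ref{lem:techical}, an explicit seven-point configuration $\vp,\vz,\vz^*,\vu,\vu^*,\vy,\vy^*$ whose mutual \emph{rounded} distances (of orders $m$, $2m$, $M-1$, $M$, $2M$) force the existence of a \emph{unique} line through $\vp$, parallel to a side of $P$, separating $B_M(\vz)$ from $B_M(\vz^*)$. Two opposite far points $\vz,\vz^*$ at truncated distance $2M$ sandwich $\vp$ between two parallel faces, and the auxiliary points $\vu,\vu^*,\vy,\vy^*$ (at truncated distance $2m$ from each other and $m$ from $\vp$) are exactly what excludes the vertex degeneracy: if $\vp$ sat near a vertex of $B_M(\vz)$, then $\vu$ and $\vy$ would be confined to a single small region along the boundary and could not be $2m$ apart. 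Since the defining conditions are stated entirely in terms of $\lfloor d_P(\cdot,\cdot)\rfloor$, they are inherited by the image configuration, which yields the separating image line; the extension to arbitrary $\vw\in V$ is then done by enlarging the configuration to radius $\overline{M}\ge M+1$ and invoking uniqueness. If you want to salvage your approach, you would need to augment your single witness $\vw$ with a symmetric partner on the other side of $\ell$ and with the vertex-excluding auxiliary points --- at which point you have essentially reconstructed Lemma~\ref{lem:techical}.
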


The proof of Theorem~\ref{thm:main} is based on the fact that the lines emanating from a {\em finite} number of points generate a grid which is infinitely dense, which means that a finite number of points completely determine the step-isometry. These ideas will be made precise in Lemma~\ref{lem:dense} stated below.

We introduce some terminology which describes the lines that form the dense grid of lines. Let $B$ be a set of points in  $\mathbb{R}^{2}$, and let $\mathcal{G}$ be a set of vectors in $\Rrr^2$. Define a collection of lines
$$
\mathcal{L}(B,\mathcal{G})=\bigcup\limits_{i\in \mathbb{N}}\mathcal{L}_{i}(B,\mathcal{G}).
$$
as follows. The set
$\mathcal{L}_{0}(B,\mathcal{G})$ contains all lines through points in $B$ with normal vector in $\mathcal{G}$, as well as their integer parallels. Assume that $\mathcal{L}_{i}(B,\mathcal{G})$ has been defined for some $i\geq 0.$ Then $\mathcal{L}_{i+1}(B,\mathcal{G})$ consists of all lines with normal vector in $\mathcal{G}$ going through a point $\vp$ which is an intersection point of two lines in $\mathcal{L}_{i}(B)$.

Now let $V$ be a countable dense set in $\Rrr^2$, and $f:V\rightarrow V$ a step-isometry.
Lemmas~\ref{lem:exist non-poly} and \ref{lem:exist poly} can be used to show that, given a set $B\subseteq V$, there exists a countable generator set $\mathcal{G}$ of $P$ so that $\mathcal{L}_0(B,\mathcal{G})$ must be respected by $f$. Using an inductive argument, it can be shown that, if all lines in $\mathcal{L}_0(B,\mathcal{G})$ must be respected, then  all lines in $\mathcal{L}(B,\mathcal{G})$ must be respected. This results in the following crucial lemma.

\begin{lemma}\label{lem:respect}
Let $d_P\in\Omega$, let $V$ be a countable dense set in $\Rrr^2$, and let $f:V\rightarrow V$ be a step-isometry. Then
\begin{enumerate}
\item[(a)] there exists a countable generator set $\mathcal{G}$ for $P$, so that for each set $B\subseteq V$, $f$ must respect all lines in $\mathcal{L}(B,\mathcal{G})$,

\item[(b)] there exists a map $\sigma:\mathcal{G}\rightarrow \Rrr^2$ 
such that, if  a line  $\ell\in \mathcal{L}(B,\mathcal{G})$ has normal vector $\va\in\mathcal{G}$ then its image under the line map has normal vector  $\sigma(\va)$, and
\item[(c)] the set $\mathcal{G'}=\{ \sigma(\va ):\va\in\mathcal{G}\}$ is a generator set for $P$.
\end{enumerate}
\end{lemma}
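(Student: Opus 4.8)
The proof of Lemma~\ref{lem:respect} proceeds by combining the two "existence of respected lines" lemmas (Lemmas~\ref{lem:exist non-poly} and \ref{lem:exist poly}) with an inductive argument that propagates "respectedness" along the grid $\mathcal{L}(B,\mathcal{G})$, while simultaneously tracking what happens to normal vectors. For part (a), I would first construct the generator set $\mathcal{G}$: in the polygonal case $\mathcal{G} = \mathcal{G}_P$ is finite and Lemma~\ref{lem:exist poly} gives directly that every line through a point of $V$ parallel to a side of $P$ (equivalently, with normal in $\mathcal{G}_P$) is respected; in the smooth case, Lemma~\ref{lem:exist non-poly} says every line through two points of $V$ is respected, so I take $\mathcal{G}$ to be a countable dense set of normal vectors realized by lines through pairs of points of $V$ — density of $V$ guarantees this is a generator set for $P$ in the sense of Section~\ref{sec:norm}. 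Either way, $\mathcal{L}_0(B,\mathcal{G})$ consists of lines through points of $B$ with normal in $\mathcal{G}$ together with their integer parallels; the base lines are respected by the existence lemmas (note the smooth existence lemma also handles parallels through any third point, and the polygonal one combined with Lemma~\ref{lem:consistent}($iv$) handles integer parallels since a step-isometry respects all integer parallels of a respected line).

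**The inductive step.** Assume all lines in $\mathcal{L}_i(B,\mathcal{G})$ are respected. A line in $\mathcal{L}_{i+1}(B,\mathcal{G})$ has normal in $\mathcal{G}$ and passes through a point $\vp$ that is the intersection of two respected lines $\ell_1,\ell_2 \in \mathcal{L}_i(B,\mathcal{G})$. The key observation is that although $\vp$ need not lie in $V$, we can recover a respected line through it: by Lemma~\ref{lem:consistent}($i$)–($ii$), each $\ell_j$ has a unique image $\ell_j'$, and in the polygonal case (by the proof of Lemma~\ref{lem:exist poly}) or the smooth case (Lemma~\ref{lem:exist non-poly}) the intersection structure is preserved, so $\ell_1' \cap \ell_2'$ is a well-defined point $\vp'$, the "image" of $\vp$. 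One then argues that the line through $\vp$ with normal $\va \in \mathcal{G}$ is respected with image the line through $\vp'$ with normal $\sigma(\va)$ — here I would approach $\vp$ by points of $V$ lying on lines of $\mathcal{L}_i(B,\mathcal{G})$ close to $\vp$ and use that the corresponding images cluster near $\vp'$, together with the appropriate existence lemma applied at those approximating points. This simultaneously establishes (b): the image normal $\sigma(\va)$ depends only on $\va$ and not on which line with that normal we chose, because two parallel respected lines have parallel images by Lemma~\ref{lem:consistent}($iii$), and one checks the map is consistent across the inductive levels by uniqueness of images.

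**Part (c) and the main obstacle.** For (c), that $\mathcal{G}' = \sigma(\mathcal{G})$ is a generator set for $P$: since $f$ is a step-isometry from $V$ to itself and $V$ is dense, the images of the respected lines must again be lines of the same "directional type" — in the polygonal case Lemma~\ref{lem:exist poly} says images are parallel to sides of $P$, and $\sigma$ must be a bijection of $\mathcal{G}_P$ onto itself (there are only $k$ side-directions and $f$ being bijective on a dense set cannot collapse two of them), so $\mathcal{G}' = \mathcal{G}_P$; in the smooth case one argues $\sigma$ preserves the density and the defining inequalities of $P^*$, so the closure of the region cut out by $\mathcal{G}'$ is again $P$. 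I expect the main obstacle to be the rigorous handling of the intersection point $\vp$ in the inductive step when $\vp \notin V$: one must show that "respecting a line through an abstract point" makes sense and is forced, which requires a limiting argument using density of $V$ along the respected lines $\ell_1, \ell_2$ and controlling that the images $f(\vv)$ of nearby $\vv \in V$ converge to the putative image point — this is where the step-isometry hypothesis (controlling distances up to truncation, hence up to bounded error) must be leveraged carefully, presumably by rescaling or by using integer parallels to pin down positions exactly. The consistency of $\sigma$ across all of $\mathcal{L}(B,\mathcal{G})$ (and independence from $B$) is the other delicate bookkeeping point, resolved by the uniqueness clause Lemma~\ref{lem:consistent}($ii$).
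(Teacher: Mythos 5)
Your overall skeleton matches the paper's: the same construction of $\mathcal{G}$ (finite side-normals in the polygonal case, normals realized by lines through pairs of points of $V$ in the smooth case), the same base case via Lemmas~\ref{lem:exist non-poly}, \ref{lem:exist poly} and Lemma~\ref{lem:consistent}($iv$), the same well-definedness of $\sigma$ via preservation of parallelism and uniqueness of images, and essentially the same arguments for part (c) (injectivity on the finitely many side-directions in the polygonal case; a density/separation argument in the smooth case). However, the inductive step --- which you correctly identify as the main obstacle --- is left unresolved, and the route you sketch for it would fail. You propose to approach the intersection point $\vp=\ell_1\cap\ell_2$ by points of $V$ and argue that ``the corresponding images cluster near $\vp'$.'' A step-isometry is not continuous and controls distances only up to an additive error of $1$ (it preserves $\lfloor d\rfloor$, nothing finer), so images of points converging to $\vp$ need not converge to anything, let alone to $\ell_1'\cap\ell_2'$; moreover $\vp\notin V$ in general, so there is no image point to cluster around, and there need not even be points of $V$ lying on the lines $\ell_1,\ell_2$. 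No rescaling rescues this, since the error of $1$ does not shrink.

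The paper's actual inductive step avoids limits entirely. One \emph{defines} $\ell_3'$ to be the line through $\vp'=\ell_1'\cap\ell_2'$ with normal $\sigma(\va_3)$ and then shows $f$ respects $\ell_3$ with this image by contradiction: if some $\vv\in V$ lies to the left of $\ell_3$ but $f(\vv)$ lies to the right of $\ell_3'$, take $\ell_4$ to be the line through $\vv$ parallel to $\ell_3$. Since $\vv\in V$, Lemma~\ref{lem:exist non-poly} or \ref{lem:exist poly} applies directly and forces $\ell_4$ to be respected, with image the parallel to $\ell_3'$ through $f(\vv)$. By density of $V$ one picks $\vw\in V$ to the right of $\ell_4$ but to the left of both $\ell_1$ and $\ell_2$; its image would have to lie to the right of $\ell_4'$ and to the left of both $\ell_1'$ and $\ell_2'$, but that region is empty because $\ell_4'$ lies strictly beyond $\ell_3'$, which passes through $\ell_1'\cap\ell_2'$. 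The key missing idea in your proposal is this auxiliary parallel line through an actual vertex of $V$ witnessing the purported failure, which converts the problem at the abstract point $\vp$ into one the existence lemmas already solve.
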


If the vectors in $\mathcal{G}$ fall into two parallel classes, then $\mathcal{L}(B,\mathcal{G})=\mathcal{L}_0(B,\mathcal{G})$. In particular, in this case no new lines are generated in the induction step. The following lemma shows that, if $d_P$ is not a box metric, and thus, any generator set $\mathcal{G}$ for $P$ contains at least three vectors that are pairwise non-parallel, then a finite set $B$ can be chosen so that lines in $\mathcal{L}(B,\mathcal{G})$ are arbitrarily close together.

\begin{lemma}\label{lem:dense}
Let $d_P\in \Omega$, and let $\mathcal{G}$ be a countable generator set for $P$ which contains at least  three vectors that are pairwise non-parallel. Let $B=\{\vp,\mathbf{q} \},$ and fix $\va\in \mathcal{G}$, where $\va \cdot (\mathbf{p}-\vq)=r\in (0,1)$.
Then the family $\mathcal{L}(B,\mathcal{G})$ contains all the following lines,
where  $z_1,z_2\in \mathbb{Z}$:%
\[
\mathbf{a}\cdot (\mathbf{x}-\vq)=z_1r+z_2.
\]
\end{lemma}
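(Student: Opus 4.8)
The plan is to exploit the three pairwise non-parallel generators to ``bootstrap'' new lines via the intersection step in the construction of $\mathcal{L}(B,\mathcal{G})$. Let $\va,\vb,\vc\in\mathcal{G}$ be pairwise non-parallel, with $\va$ the prescribed generator. Starting from $B=\{\vp,\vq\}$, the set $\mathcal{L}_0(B,\mathcal{G})$ contains the line $\ell_0\colon \va\cdot(\vx-\vq)=0$ through $\vq$ and the line $\va\cdot(\vx-\vq)=r$ through $\vp$, together with all their integer parallels, i.e.\ all lines $\va\cdot(\vx-\vq)=z_2$ and $\va\cdot(\vx-\vq)=r+z_2$ for $z_2\in\Zzz$. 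It also contains the lines through $\vp$ and through $\vq$ with normals $\vb$ and $\vc$ and their integer parallels. The key geometric observation is that because $\vb$ and $\vc$ are not parallel to $\va$, the line through $\vq$ with normal $\vb$ meets the line $\va\cdot(\vx-\vq)=r$ in a point; from that intersection point one draws (in $\mathcal{L}_1$) the line with normal $\vc$, which in turn meets $\ell_0$ in a point whose $\va$-coordinate (measured from $\vq$) is exactly $r$ translated by an amount controlled by $\vb,\vc$. Iterating this ``triangle-walk'' lets one realize shifts by $r$ repeatedly, and combined with the integer parallels already present one obtains every value $z_1 r + z_2$.

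Concretely, I would first prove the single-step claim: if $\mathcal{L}(B,\mathcal{G})$ already contains the line $\va\cdot(\vx-\vq)=t$ for some $t$, then it contains $\va\cdot(\vx-\vq)=t+r$ and $\va\cdot(\vx-\vq)=t-r$. To see this, take any point $\vw$ on the line $\va\cdot(\vx-\vq)=t$ that also lies on an already-constructed line with normal $\vb$ (such points exist: the line with normal $\vb$ through a suitable point of $B$, or one of its integer parallels, is in the family, and two non-parallel lines intersect). At $\vw$ we have a line with normal $\vc$ in the family; follow it to its intersection $\vw'$ with an appropriate $\vb$-line; at $\vw'$ there is an $\va$-line in the family. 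A short linear-algebra computation shows the $\va$-coordinate of $\vw'$ differs from that of $\vw$ by a fixed nonzero constant determined by $\va,\vb,\vc$ — but to land exactly on a shift by $r$ I instead use the two canonical $\va$-lines at distance $r$ (through $\vp$ and through $\vq$) as the ``seeds'' and walk the triangle using $\vb$- and $\vc$-lines that pass through points of intersection already in $\mathcal{L}_i$, choosing the $\vb$- and $\vc$-lines among the integer parallels so that the net horizontal displacement is precisely $r$. The cleanest route is: show that for each $i$, if $\va\cdot(\vx-\vq)=s$ is in the family then so is $\va\cdot(\vx-\vq)=s+r$ obtained by intersecting with a $\vb$-line in the family, jumping to a $\vc$-line, and intersecting that with the $\va$-line through $\vp$ translated by integer parallels. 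Then induction on $z_1\geq 0$ (and symmetrically $z_1\leq 0$, using $\vw\mapsto\vw'$ reversed) gives $\va\cdot(\vx-\vq)=z_1 r$ for all $z_1\in\Zzz$; adding integer parallels (which are automatically in $\mathcal{L}(B,\mathcal{G})$ by construction, or by Lemma~\ref{lem:consistent}(iv)) yields $\va\cdot(\vx-\vq)=z_1 r + z_2$.

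I expect the main obstacle to be the bookkeeping in the ``triangle walk'': one must verify that the intersection points used at each stage genuinely lie on lines already placed in $\mathcal{L}_i(B,\mathcal{G})$ (so that the next line with normal $\vc$ or $\va$ is legitimately added in $\mathcal{L}_{i+1}$), and that the displacement in the $\va$-direction accumulated over one triangle is exactly $r$ rather than some other multiple. This amounts to choosing, among the integer parallels of the $\vb$- and $\vc$-lines, the correct representatives; the existence of a choice that gives displacement exactly $r$ follows because the $\va$-coordinate map is affine along any $\vb$-line and we have an $\va$-line at offset $r$ (through $\vp$) available to pin the endpoint. Once the single increment by $r$ is established rigorously, the induction is routine, and the integer parallels are handled directly from the definition of $\mathcal{L}_0$ together with Lemma~\ref{lem:consistent}(iv) applied to the step-isometry $f$. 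I would organize the write-up as: (1) set up coordinates so $\va\cdot\vx$ is one coordinate function; (2) prove the increment lemma by exhibiting the explicit triangle; (3) induct to get all $z_1 r$; (4) invoke integer parallels for the $z_2$ term.
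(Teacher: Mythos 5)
Your proposal is correct and follows essentially the same route as the paper: the paper also bootstraps lines with normal $\va$ at offsets $z_1r+z_2$ by a triangle walk through the integer lattice of lines with normals $\va_1,\va_2,\va_3$, making the displacement computation explicit via similar triangles (producing the offset $1-r$) and a parallelogram (producing the offset $2r$). One small caveat: integer parallels are only built into $\mathcal{L}_0$, not into the later $\mathcal{L}_i$, and Lemma~\ref{lem:consistent}($iv$) concerns lines being respected by $f$ rather than membership in $\mathcal{L}(B,\mathcal{G})$, so the $z_2$-shifts should be obtained, as the paper does, by repeating the walk in the translated triangles of the lattice rather than by an appeal to closure under integer parallels.
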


Thus, if $B=\{ \vp,\vq\}$ is so that  $d_P(\vp,\vq)<\epsilon$, then for all vectors $\va \in\mathcal{G}$,   $\va \cdot (\mathbf{p}-\vq)<\epsilon$, and the lines in $\mathcal{L}(B,\mathcal{G})$ generate a grid where lines are at most $\epsilon $ apart.
If $B\subseteq V$, then by Lemma \ref{lem:consistent}, any step-isometry must be consistent with this grid.  




The final lemma leads to the proof of the main result. It shows that any step-isometry on a countable dense set gives rise to an isometry on a dense set of points in $\Rrr^2$ formed by a grid of lines which all have to be respected.



With these lemmas at our disposal, we may now supply a proof of our first main result.

\begin{proof}[Proof of Theorem~\ref{thm:main}]
For the reverse direction, let $d_P\in \Omega$ be a norm-based metric which is not a box metric, and let 
$V$ be any dense set in $\Rrr^2$. Let $f:V\rightarrow V$ be a step-isometry; we must show that $f$ is in fact an isometry.   Let $\mathcal{G}$ be a countable generator set for $P$ for which the conclusions of Lemma \ref{lem:respect} (a) hold. Let $\sigma:\mathcal{G}_P\rightarrow \Rrr^2$ be as given by Lemma \ref{lem:respect} (b).

Fix $\vp,\vq\in V$, and let $B=\{ \vp,\vq\}$. 
Fix $\va\in\mathcal{G}$, and let $r=\va\cdot (\vq-\vp)$ and $r'=\sigma(\va)\cdot (f(\vq)-f(\vp))$.
By Lemma \ref{lem:dense}, for any $z_1,z_2\in \Zzz$ the line $\ell$ with equation $\va\cdot (\vx -\vq)= z_1r+z_2$ is in $\mathcal{L}(B,\mathcal{G})$, and by Lemma \ref{lem:consistent}, this line must be respected.

Following the proof of Lemma~\ref{lem:dense}, it can be easily deduced that the image of $\ell$ must have as its  equation  $\sigma(\va)\cdot (\vx -\vq')= z_1r'+z_2$, where $\vq'=f(\vq)$. Namely, each of the points $\vp_i$ used in the proof and in Figure~\ref{fig6} is defined by the intersection of previously given lines. Their image $f^*(\vp_i)$ will be similarly defined by the intersection of the images of those lines. Thus, the images again follow a layout as in Figure~\ref{fig6} (right), but in this case the distance between the images of the reference points $\vq$ and $\vp$ equals $r'$.

We now claim that $r=r'$. Assume that this is not the case. Then there exist integers $z_1,z_2\in \Zzz$ so that $z_1r+z_2<1$ and $z_1 r'+z_2 >1$. Choose a point $\vv\in V$ so that $z_1r+z_2<\va\cdot (\vv-\vq) < 1$. Thus, $\vv$ lies to the right of the line with equation $\va\cdot(\vx-\vq)=z_1r+z_2$, and to the left of the line with equation $\va\cdot (\vx-\vq) =1$.  Thus, its image $\vv'=f(\vv)$ must lie to the right of the line with equation $\sigma(\va)\cdot (\vx-\vq')=z_1r'+z_2$, and to the left of the line with equation $\sigma(\va)\cdot (\vx-\vq')=1$. Hence, $z_1r'+z_2< \sigma(\va)\cdot (\vv-\vq)<1$, which is a contradiction.



Since $\va$ was arbitrary, we conclude that for all $\va\in\mathcal{G}$:
\begin{equation}
\va\cdot (\vp-\vq)= \sigma(\va)\cdot(\vp'-\vq'). \label{ooo}
\end{equation}
Let $\mathcal{G'}=\{ \sigma(\va ):\va\in\mathcal{G}\}$. By Lemma \ref{lem:respect} (c),  $\mathcal{G}'$ is again a generator set for $P$. By the definition of a generator set, we have that

\begin{eqnarray*}
d_P(\vp,\vq)&=&\sup\{ |\va\cdot (\vq-\vp)|: \va\in\mathcal{G}\}\\
&=&\sup\{ |\sigma(\va)\cdot (\vq'-\vp')|:\va\in\mathcal{G}\}\\
&=&\sup\{ |\va'\cdot (\vq'-\vp')|:\va'\in\mathcal{G'}\}\\
&=&d_P(\vq',\vp'),
\end{eqnarray*}
where the second equality follows by (\ref{ooo}). Since $\vp,\vq$ were arbitrary, $f$ is an isometry. This shows that $d_P$ has the truncating property, and thus, concludes the first part of the proof.

\smallskip
For the forward direction, assume that $d_P\in \Omega$ is a box metric. Thus, $P$ is a parallelogram, and $d_P$ has a set of two non-parallel generators $\va_1$ and $\va_2$, so that
\[
P=\{ \vx : -1\leq \va_i\cdot\vx \leq 1\mbox{ for }i=1,2\}.
\]

Define a set $A\subseteq \Rrr$ to be {\sl integer distance free (idf)} if for distinct  $x,y\in  A$, $|x-y|$ is not an integer. In \cite{bon2}, it was shown that, for any  {\sl idf} set $A$ in $\Rrr$, a function $f:A\rightarrow A$ is a step-isometry if it has the property that, for any $x,y\in A$, we have that $x-\lfloor x\rfloor <y-\lfloor y\rfloor$ if and only if $f(x)-\lfloor f(x)\rfloor <f(y)-\lfloor f(y)\rfloor$. It is straightforward to see that if $A$ is dense, then it is possible to construct functions $f$ that have this property but are not isometries (see \cite{bon2} for details).

Let $V$ be a dense set in $\Rrr^2$, and let $V_i$, where $i=1,2$, be defined as
\[
V_i=\{ \va_i\cdot \vv:\vv\in V\}.
\]
Let $V$ be such that both $V_1$ and $V_2$ are  {\sl idf}, and let $f_1$, $f_2$ be step isometries on $V_1$ and $V_2$, respectively, which are not isometries. Define $f:V\rightarrow V$ so that for each $\vv\in V$ so that $\vv=x_1\va_1+x_2\va_2$, $f(\vv )=f(x_1)\va_1+f(x_2)\va_2$. The function $f$ is a step-isometry which is not an isometry.

Finally, we show that there exists a countable dense subset $V\subseteq \Rrr^2$ with the desired property. Namely, take any  countable dense subset $V^*$ which contains $\mathbf{0}$. Let $A=\{ \va_i\cdot \vv:i=1,2\}$. Choose $\alpha >0$ so that $\alpha \mathbf{q}\not\in\Zzz$ for all $\mathbf{q}\in A$. Since $A$ is countable, such $\alpha $ exists. Now let $V=\{ \alpha \vv:\vv\in V^*\}$. We therefore, have that box metrics do not have the truncating property. \end{proof}

\subsection{Proof of Theorem \ref{non2}}
We finish with the proof of our second main result. For graphs $G$ and $H$, a \emph{partial isomorphism} from $G$ to $H$ is
an isomorphism of some finite induced subgraph of $G$ to an induced subgraph of $H$. A standard approach to show two countably infinite graphs are isomorphic is to build a chain of
partial isomorphisms whose union gives an isomorphism. Using the probabilistic method, we show how this fails for random geometric graphs whose metric is not a box metric. In the following proof, the probability of an event $A$ is denoted by $\mathbb{P}(A).$

\begin{proof}[Proof of Theorem~\ref{non2}]
Suppose next that $\Rrr^2$ is equipped with a metric $d\in\Omega$ which is not box, and let $P$ be the shape of $d$. Let $\mathcal{G}$ be a countable generator set for $P$ which contains at least  three vectors that are pairwise non-parallel, and let $V$ be a countable dense set. Consider graphs produced by
the model $\mathrm{LARG}(V,p)$ for some probability $p\in (0,1)$.

Define an enumeration $\{ v_i:i\in \Nnn^{+}\}$ of $V$ to be \emph{good} if $v_i\not=v_j$ for $i\not=j$,
$d(v_i,v_{i+1})< 1$ for all $i\in \Nnn^+$ and for all $i\geq 4$, there exist integers $j,k,\ell$, $0<j<k<\ell <j$, so that the distance of $v_i$ to $v_j,v_k,v_{\ell}$ is determined by three different vectors in $\mathcal{G}$. As discussed earlier, the last condition implies that, if the positions of $v_1,\dots ,v_i$ are given, then the position of $v_j$ is completely determined by the distances from $v_i$ to $v_j,v_k,v_{\ell}$.

\begin{claim}\label{claim:good}
Any countable dense set $V$ has a good enumeration.
\end{claim}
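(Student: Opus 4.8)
The plan is to construct the good enumeration $v_1,v_2,\dots$ greedily, appending finite blocks of points while dovetailing against a fixed set-enumeration $w_1,w_2,\dots$ of $V$, so that $w_m$ is listed by stage $m$ and every point of $V$ eventually appears without repetition. Two of the three clauses defining ``good'' come cheaply. The condition $d(v_i,v_{i+1})<1$ holds automatically provided each new block is joined to the previous one by a short chain in $V$ whose consecutive distances are $<1$: since $V$ is dense, the graph on $V$ joining pairs at distance $<1$ is connected (join $u$ and $w$ by finely subdividing the segment $\overline{uw}$ and picking a point of $V$ near each subdivision point) and every vertex has infinite degree, so such chains exist, and can moreover be chosen to avoid any prescribed finite set, $V$ minus a finite set being still dense. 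For the geometric input: because $d$ is not a box metric, $\mathcal{G}$ contains three pairwise non-parallel generators $\va_1,\va_2,\va_3$ (for a polygonal shape this uses that a point-symmetric polygon other than a parallelogram has at least three edge directions; for a smooth shape one may also, harmlessly, enlarge $\mathcal{G}$ to contain the exact perpendicular generator of $P$ in every direction realised between two points of $V$). For each point $q$ and each $s$ there is a nonempty open region $R_s(q)$, a translate of a fixed region, with $d(q,a)=\va_s\cdot(q-a)$ for all $a\in R_s(q)$ (in the smooth case: with $d(q,a)$ realised by a generator close to $\va_s$); and density of $V$ supplies points of $V$ in the interior of each $R_s(q)$ at any prescribed scale and with any prescribed angular margin.

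The anchor clause I would handle by carrying an \emph{anchor reservoir} as part of an invariant: having listed $v_1,\dots,v_n$, this sequence is a valid prefix of a good enumeration, and in addition the last vertex $v_n$ has three already-listed points, at a fixed distance $\rho$ from it, lying well inside its regions $R_1(v_n),R_2(v_n),R_3(v_n)$ with angular margin at least a fixed $\mu_0>0$. After a short explicit preamble (three general-position points $v_1,v_2,v_3$ at pairwise distance $<1$, then a handful of points installing the invariant), stage $m$ proceeds thus: if $w_m$ is already listed, do nothing; otherwise join the current last vertex $u$ to $w_m$ by a chain $u=c_0,c_1,\dots,c_M=w_m$ in $V$, avoiding previously listed points (except $u$) and with each step shorter than a tiny $\sigma\ll\rho\mu_0$, and append $c_1,\dots,c_M$ in order. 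When $c_t$ is appended, the step $c_{t-1}\to c_t$ is so small that the reservoir attached to $c_{t-1}$ still lies inside the three regions of $c_t$ with positive margin, so $c_t$ meets the anchor clause; one then inserts three fresh points in the three regions of $c_t$ at distance $\rho$ with margin $\mu_0$, restoring the invariant — and achieving this insertion is the delicate point, discussed next. Since each stage appends only finitely many points, the successive prefixes union to an enumeration of $V$, good by construction.

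The crux is the circular dependency wired into the definition: every appended point, be it a chain vertex, a reservoir point, or $w_m$, is itself of index $\geq 4$ and so demands three earlier anchors. For polygonal shapes this has real teeth, because three pairwise non-parallel regions cannot all be met by points lying in a single half-plane relative to a given point; hence a vertex placed just beyond the already-visited set cannot be anchored from that set alone and must be accompanied by anchor points placed ``ahead'' of it, which in turn require anchors, and so on. Making this recursion terminate — maintaining a bounded-scale, directionally rich cluster of mutually anchoring points that travels and refreshes with the frontier, and verifying that building such a cluster is itself a bounded, self-anchoring task — is where I expect essentially all the work to lie, and the part I would develop most carefully: fixing a hierarchy of scales ($\sigma\ll\rho$, both tied to $\mu_0$) with generous slack, verifying a short finite list of inequalities among them, and checking the base case. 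By contrast the walk clause and the exhaustion of $V$ are routine.
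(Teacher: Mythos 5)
Your overall strategy---a greedy enumeration dovetailed against a background listing of $V$, short chains to secure $d(v_i,v_{i+1})<1$, and an invariant guaranteeing that each newly listed point has three earlier anchors---is the same as the paper's, which builds a chain of partial good enumerations by walking from the current last vertex to the next unlisted point along a path of points each determined by already-listed ones. The routine parts of your write-up (connectivity of the distance-$<1$ graph on a dense set, avoidance of finitely many used points, exhaustion of $V$, finitely many insertions per stage) are fine. But you explicitly stop short of the one step that carries all the content: showing that the anchoring invariant can be re-established each time the frontier advances. ``Fixing a hierarchy of scales and verifying a short finite list of inequalities'' is precisely the part that has to be done, and it is not done; as written this is a plan for a proof rather than a proof. (To be fair, the paper's own argument is equally terse at exactly this spot, asserting by density the existence of a path of points whose positions are determined by earlier ones.)

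More seriously, the geometric assertion on which you base your diagnosis of the difficulty is false, and its falsity is the missing idea. You claim that three points whose distances to a given point $\vp$ are determined by three pairwise non-parallel generators cannot all lie in one half-plane relative to $\vp$. For a polygonal shape with $2k\geq 6$ sides, the plane around $\vp$ splits into $2k$ cones over the faces, opposite faces giving opposite cones; since the determining condition reads $d(\vx,\vy)=|\va\cdot(\vx-\vy)|$, each of the $k$ directions contributes an opposite pair of cones, and any three cyclically consecutive cones represent three pairwise non-parallel directions while their union is contained in a closed half-plane through $\vp$. So a point \emph{can} be anchored entirely from behind. This is not a quibble: if your assertion were true, a convex-hull-extremal point of any finite prefix could never acquire three anchors among earlier points, no finite ``mutually anchoring cluster'' could exist, and the Claim itself would fail. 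Conversely, anchoring from behind is exactly what terminates the recursion you worry about: to list a point $\vp$ just outside the region currently anchored by the listed set, first append three points of $V$ \emph{inside} that region, chosen in the interiors of three consecutive cones of $\vp$ facing back into it (these interiors meet the region because their union is essentially the half-plane toward it), and only then append $\vp$; iterating grows the anchored region past any prescribed point in finitely many stages. Your one-sided regions $R_s(q)$ attached to a triple of generators fixed once and for all either do fit in a half-plane (if the triple is consecutive, contradicting your claim) or do not (in which case extremal points are unanchorable in your framework); the repair is to use both signs and to choose the three cones adaptively, facing the already-anchored set.
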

\begin{proof}
For a positive integer $n$, we call $\{ v_i:1\le i \le
n\}$ a \emph{partial good enumeration} of $V.$ We prove the claim
by constructing a chain of partial good enumerations
by induction. Since $V$ is  dense in $\Rrr^2$, we may choose a triangular set whose points are pairwise within
$1$ of each other (see also the discussion in Section \ref{subsec:main}). Let $V_1 =\{v_1,v_2,v_3\}.$
Enumerate $V \setminus
V_1$ as $\{u_i: i\ge 2\}.$ Starting from $V_1$, we
inductively construct a chain of partial good enumerations $V_n$,
$n\geq 1$, so that for $n\geq 2$, $V_n$ contains $\{u_i: 2\le i
\le n\}.$

We now want to form $V_{n+1}$ by adding $u=u_{n+1}$. If $u\in V_n$,
then let $V_{n+1}=V_n$. Assume without loss of generality that
$u \not\in V_n.$ Let $N=|V_n|$. If $d(v_N ,u) < 1$ and the position of $u$ is determined by $V_n$,
then let $v_{N+1}=u$ and add it to $V_n$ to form $V_{n+1}.$ Otherwise, by the
density of $V,$ choose a finite path $P=p_0,\dots, p_\ell$ of
points of $V\setminus V_n$
starting at $v_N=p_0$ and ending at $u=p_\ell$ so that two consecutive points
in the path are distance at most 1, and their position is determined by their distances to points in $V_n$. Then add the vertices of
$P$ to $V_n$ to form $V_{n+1}$ and enumerate them so that $v_{N+i}=p_i$ for
$i=0,1,\dots,\ell$.
Taking the limit of this chain,
$\bigcup_{n\ge 1} V_n$ is a
good enumeration of $V$, which proves the claim. \end{proof}

Let $V=\{ v_i:i\geq 1\}$ be a good sequence in
$V,$ and for any $n$, let $V_n = \{ v_i: 1\le i \le n \}.$ Let $G$ and $H$ be two graphs produced by $\mathrm{LARG}(V,p)$. We say that two pairs $\{ v,w\}$ and $\{ v',w'\}$ of vertices
are \emph{compatible} if $\{ v,w\}$ are adjacent in $G$ and $\{
v',w'\}$ are adjacent in $H$ or $\{ v,w\}$ are non-adjacent in
$G$ and $\{ v',w'\}$ are non-adjacent in $H$. For two pairs $\{ v,w\}$
and $\{ v',w'\}$ such that $d(v,w)=d(v',w')$, the probability that they
are compatible equals
\[
p^*=\left\{\begin{array}{ll}
p^2+(1-p)^2 & \text{if }d(v,w)<\delta\text{, and}\\
1 &\text{otherwise.}
\end{array}\right.
\]

Suppose that $G$ and $H$ are isomorphic, and let $f$ be an isomorphism. By Lemma~\ref{newlemmm}, $f$ is a step-isometry on $V$.  By Theorem \ref{thm:main}, $f$ must be an isometry. Since we are following a good enumeration, the images of
a triangular set $V_1 = \{ v_1, v_2, v_3 \}$ in $\Rrr^2$ (we identify these with vertices so do not denote them in bold) determine $f$ completely.  Let $A_n$ be the event that there exists a partial
isomorphism $f$ from the subgraph induced by $V_n$ into $H$ so that
$f(V_1)\subseteq V_n$, and let
$$
A^*_n=\bigcap_{\nu \geq
  n} A_{\nu}.
$$
Note that $A^*_{n}\subseteq A^*_{n+1}$ for all $n$.

Next, we estimate the probability of $A^*_n$. Note first that
$\mathbb{P}(A^*_n)\leq \mathbb{P}(A_\nu)$ for all $\nu\geq n$. For any tuple
$(u_1,u_2,u_3)$ of distinct vertices in $V_n$, let
$C_n(u_1,u_2,u_3)$  be the event that there exists a partial
isomorphism $f$ from the subgraph induced by $V_n$ in $G$ to $H$ so that $f(v_i)=u_i$ for
$i=1,2,3$. If $C_n$ happens, then all pairs
$(v_i,v_{i+1})$ and $(f(v_i),f(v_{i+1}))$ must be compatible, for
$1\leq i<n$. Therefore, $$\mathbb{P}(C_n(u_1,u_2,u_3))\leq (p^*)^{n-1}.$$ Now
\[
A_n=\bigcup_{\{u_1, u_2, u_3\}\subseteq V_n} C_n(u_1,u_2,u_3),
\]
so for $n\ge 3$ we have that $\mathbb{P}(A_n)\leq n^{2k+2} (p^*)^{n-1}$, and
\[
\mathbb{P}(A^*_n)\leq \inf\{ \nu^{2k+2} (p^*)^{\nu-1} : \nu\geq n\} =0.
\]

If $B$ is the event that $G$ and $H$ are
isomorphic, then $$B\subseteq \bigcup_{n\in \Nnn^+} A^*_n.$$ Since
the union of countably many sets of measure zero has measure zero, we
conclude that $\mathbb{P}(B)=0$, and thus, with probability $1,$ $G$ is not isomorphic to $H.$ This shows that $d$ is unpredictable, and completes the first part of the proof.

\smallskip
For the  suppose that $\Rrr^2$ is equipped with a box metric $d_P\in\Omega$. As in the proof of Theorem \ref{thm:main}, the shape $P$ can be described as
\[
P=\{ \vx : -1\leq \va_i\cdot\vx \leq 1\mbox{ for }i=1,2\}.
\]
and let $T:\mathbb{R}^2\rightarrow \mathbb{R}^2$  be the invertible linear transformation which sends $\va_1$ to $(1,0)^T$ and $\va_2$ to $(0,1)^T$. Let $V$ be a set dense in $\Rrr^2$, and let $T(V)=\{ T(\vx):\vx\in V\}$. Note that $T(V)$ is also dense in $\Rrr^2$. Then for any $p\in (0,1)$ the model $\mathrm{LARG}(V,p)$ in $\Rrr_2$ equipped with the box metric is equal to the model $\mathrm{LARG}(T(V), p)$ in $\Rrr^2$ with the $L_\infty$ metric.
It was shown in  \cite{fu} Theorem~14 that, if $V$ is a dense set with the additional requirement that no two vertices have integer distance, then with probability 1 any two graphs produced by $\mathrm{LARG}(V,d)$ are isomorphic.  It is straightforward to see there exist dense sets $V$ so that for all $\vx,\vy\in V$, $\va_i\cdot (\vx-\vy)\not\in\Zzz$. For such sets $V$, $T(V)$ has the property that there is non-integer distance between any $x$-coordinates and any $y$-coordinates of points in $T(V)$. We therefore, have that $d_P$ is predictable. \end{proof}

\section{Proofs of Lemmas~\ref{lem:consistent},~\ref{lem:exist non-poly},~\ref{lem:respect}, and \ref{lem:dense} }\label{sec:proofs1}

\begin{proof}[Proof of Lemma \ref{lem:consistent}] Suppose $f$ respects the line $\ell$, and $\ell'$ is the image of $\ell$ under the line map.  Property ($i$) follows directly from (\ref{eq:consistent}): if $\ell$ contains a point $\vv\in V$, then  $\ell'$ must contain $v$.

Suppose, by contradiction, that ($ii$) is false, so assume that there exists a second image of $\ell$ under the line map. Precisely, assume there exists a line $\ell^*$ so that if a point $\vv\in V$ is to the left (right) of $\ell$, then it is to the left (right) of both $\ell'$ and $\ell^*$. However, this implies that the region of points to the right of $\ell$ and to the left of $\ell^*$ does not contain any images under $f$ of points in $V$. This contradicts the fact that $V$ is dense in $\Rrr^2$, and $f$ is a bijection.

To prove $(iii)$, assume that $f$ respects the line $\hat{\ell}$, and $\hat{\ell'}$ is the image of $\hat{\ell}$ under the line map, and suppose that $\ell$ and $\hat{\ell}$ are not parallel. Then $\ell$ and $\hat{\ell}$ divide the plane into four regions, consisting of points that are to the left or right of $\ell$ and to the left or right of $\hat{\ell}$. Now suppose, by contradiction, that $\ell'$ and $\hat{\ell}'$ are parallel. Then $\ell'$  and $\hat{\ell}'$ divide the plane into three regions. Thus, there is one combination, for example, to the left of $\ell'$ and to the right of $\hat{\ell'}$, which is an impossibility. Since $V$ is dense in $\Rrr^2$, all four regions formed by $\ell$ and $\hat{\ell}$ contain points of $V$, which gives a contradiction. To prove the converse, apply the analogous argument to $f^{-1}$.

For $(iv)$, suppose that $f$ is a step-isometry and $\ell $ is a line respected by $f$, with image $\ell'$. Let $\hat{\ell}$ be an integer parallel of $\ell$. Let $\va\cdot\vx=r$ be the equation defining $\ell$, and $\va\cdot \vx = r+z$ the equation defining $\hat{\ell}$, where $z\in \Zzz$. Without loss of generality, assume $z>0$.  Let $\va'\cdot \vx=r'$ be the equation of $\ell'$. We claim that the line $\hat{\ell}$ must be respected, and its image under $f$ is the line $\hat{\ell'}$ with equation $\va'\cdot\vx=r'+z$.

Suppose, by contradiction, that there exists a point $\vw\in V$ with image $\vw'=f(\vw)$
so that $\va\cdot\vw< r+z$, while  $\va'\cdot\vw' >r'+z$. Now choose $\uu\in V$ so that $d_P(\uu,\vw)<z$ and $\va\cdot\uu <r$. Since $V$ is dense in $\Rrr^2$ and the distance between $\ell $ and $\hat{\ell}$ equals $z$, we can choose such a $\uu$. Let $\uu'=f(\uu)$. Since $f$ respects $\ell$, we have that $\va'\cdot \uu'<r'$. Since $\vw$ and $\uu$ lie on opposite sides of $\ell'$ and $\hat{\ell'}$, we conclude that  $d_P(\uu',\vw')>d_P(\ell',\hat{\ell'})=z$. This contradicts the fact that $f$ is a step-isometry. See Figure~\ref{fignew1}. \end{proof}
\begin{figure}[h!]
\begin{center}
\epsfig{figure=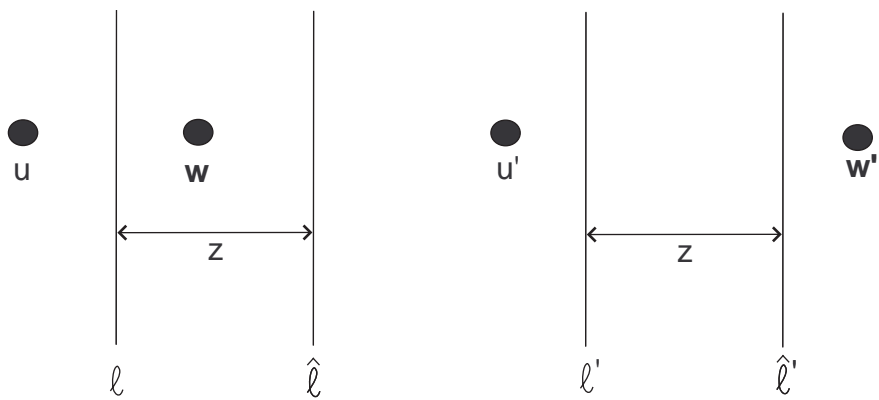}
\caption{The distance between $\vu'$ and $\vw'$ must be greater than $z$, while the distance between $\vu$ and $\vw$ is less than $z$. This leads to a contradiction.}\label{fignew1}
\end{center}
\end{figure}

\begin{proof}[Proof of Lemma \ref{lem:exist non-poly}]

Let $\ell $ be the line through $\mathbf{v}_{1}$ and $\mathbf{v}_{2},$ and $%
\ell ^{\prime }$ be the line through $\mathbf{v}_{1}^{\prime }$ and $\mathbf{%
v}_{2}^{\prime }.$ Suppose, by contradiction, that $\mathbf{u}$ and $\mathbf{w}$ are both to the right of $\ell
$, but $\mathbf{u}'$ is to the
left of $\ell^{\prime}$ and $\mathbf{w}'$ is to the right of $\ell^{\prime} $. The proof is based on the following fact: Since $\vu$ and $\vw$ are on the same side of $\ell$, no line which is
\textquotedblleft almost parallel\textquotedblright\ to $\ell $
and close to $\ell $ can separate $\mathbf{u}$ from $%
\mathbf{w}$. More precisely, for all $\gamma >0$ there exists $M
$ large so that if two points $\mathbf{x}$ and $\mathbf{y}$ are located at distance less than $\gamma $ from $\ell $ and distance greater than $M$ from $\{ \vv_1,\vv_2\}$, and $\vx$ and $\vy$ are located on opposite sides of $\{ \vv_1,\vv_2\}$, then
any line which separates $\{\mathbf{v}_{1},\mathbf{v}_{2}\}$ from $\{\mathbf{x},\mathbf{y}\}$ must have such a small angle with $\ell$ that it cannot separate $\mathbf{u}$
from $\mathbf{w}.$ See Figure~\ref{newfig11}. This fact is obviously true for the Euclidean
metric, and since $d=d_{P}$ is bounded by a constant multiple of the
Euclidean metric, it holds also for $d.$
\begin{figure}[h!]
\begin{center}
\epsfig{figure=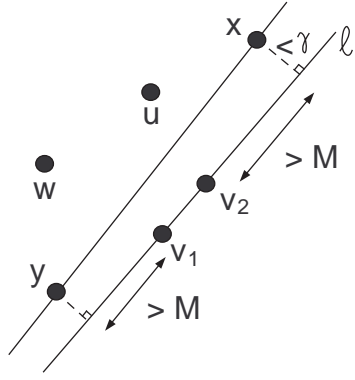}
\caption{The points $\mathbf{u}$ and $\mathbf{w}$ cannot be separated.}\label{newfig11}
\end{center}
\end{figure}

The second fact used in the proof is that $P$ (the shape of $d)$ when
sufficiently enlarged, will locally look like a line (Note this holds only
for smooth metrics.) Finally, we use the fact that any point $\mathbf{p}$ for which%
\[
d(\mathbf{p},\mathbf{v}_{1})=d(\mathbf{p},\mathbf{v}_{2})+d(\mathbf{v}_{1},\mathbf{v}_{2})
\]
must lie on the line $\ell .$ (Once again, this holds for smooth
metrics.) Thus, by controlling the distances from a point to $\mathbf{v}_{1}
$ and $\mathbf{v}_{2}$ we can guarantee that the point lies close to $\ell$.
\smallskip

\noindent \textbf{Claim.}
If $\mathbf{x}$ is a point such that%
\[
\left\lfloor d(\mathbf{x},\mathbf{v}_{1})\right\rfloor
=\left\lfloor d(\mathbf{x},\mathbf{v}_{2})\right\rfloor
+\left\lfloor d(\mathbf{v}_{1},\mathbf{v}_{2})\right\rfloor ,
\]
then $d(\mathbf{x},\ell )\leq 1.$
\smallskip

\begin{proof}[Proof of Claim] Let $M=\left\lfloor d(\mathbf{x},\mathbf{v}_{2})\right\rfloor$. Let $\mathbf{p}$ be a point on $\ell
$ such that $d(\mathbf{p},\mathbf{v}_{2})=M$ and $d(\mathbf{p},%
\mathbf{v}_{1})=M+d(\mathbf{v}_{1},\mathbf{v}_{2}).$ By
the triangle inequality, we have that%
\[
d(\mathbf{x},\mathbf{v}_{1})+d(\mathbf{x},\mathbf{p})\leq d(%
\mathbf{p},\mathbf{v}_{1})=M+d(\mathbf{v}_{1},\mathbf{v}%
_{2}).
\]

Hence,
\begin{eqnarray*}
d(\mathbf{x},\vp) &\leq &M+d(\mathbf{v}_{1},\mathbf{v}_{2})-d(\mathbf{x},\mathbf{v}_{1}) \\
&\leq &M+\left\lfloor d(\mathbf{v}_{1},\mathbf{v}_{2})\right\rfloor -\left\lfloor d(\mathbf{x},\mathbf{v}_{1})\right\rfloor +1 \\
&=&M-\left\lfloor d(\mathbf{x},\mathbf{v}_{2})\right\rfloor +1 \\
&=&1.
\end{eqnarray*}

We therefore, have that $d(\mathbf{x},\ell )\leq d(\mathbf{x},%
\mathbf{p})\leq 1.$ \end{proof}

We now complete the proof of the lemma. Define two points $\mathbf{x}$ and $\mathbf{y}$ to be \emph{antipodal} to $\{ \mathbf{v}_1 ,\mathbf{v}_2 \}$ if

\begin{eqnarray*}
\left\lfloor d(\mathbf{x},\mathbf{v}_{1})\right\rfloor -\left\lfloor d(%
\mathbf{x},\mathbf{v}_{2})\right\rfloor  &=&\left\lfloor d(\mathbf{v}_{1},%
\mathbf{v}_{2})\right\rfloor , \\
\left\lfloor d(\vy,\mathbf{v}_{1})\right\rfloor -\left\lfloor d(\vy,\mathbf{v}%
_{2})\right\rfloor  &=&\left\lfloor d(\mathbf{v}_{1},\mathbf{v}_{2})\right\rfloor ,%
\text{ }
\end{eqnarray*}

and

\[
\left\lfloor d(\mathbf{x},\mathbf{y})\right\rfloor \geq \left\lfloor d(%
\mathbf{x},\mathbf{v}_{1})\right\rfloor +\left\lfloor d(\mathbf{v}_{1},%
\mathbf{v}_{2})\right\rfloor +\left\lfloor d(\vy,\mathbf{v}_{2})\right\rfloor .
\]

By the Claim, antipodal points $\mathbf{x},\mathbf{y}$ satisfy $d(\mathbf{x}%
,\ell ),d(\mathbf{y},\ell )\leq 1$. 
Choose $M$ so that, for any two points $%
\mathbf{x}$ and $\vy$ antipodal to $\{\mathbf{v}%
_{1},\mathbf{v}_{2}\},$ if an enlargement by a factor $M$
of $P$ separates $\{\mathbf{v}_{1},\mathbf{v}_{2}\}$
from $\{\mathbf{x},\mathbf{y}\},$ then it cannot separate
$\mathbf{u}$ from $\mathbf{v}.$ This can be
done by making sure that the line through $\mathbf{x}$ and $%
\mathbf{y}$ is almost parallel to $\ell $ and does not
separate $\mathbf{u}$ from $\mathbf{v}$, and then making $M$ large enough so that any polygon separating $\{\mathbf{x},\mathbf{y}\}$ from $\{\mathbf{v}_1,\mathbf{v}_{2}\}$ must have a boundary between $\mathbf{x}$ and $%
\vy$ that is \textquotedblleft almost straight\textquotedblright .

Now choose points $\vx'$, $\vy'$ and $\vp'$ in $V$ so that $\vx'$ and $\vy'$ are antipodal to $\{\mathbf{v}'_{1},\mathbf{v}'_{2}\}$, and the following inequalities hold for their distances:
\begin{eqnarray*}
d(\mathbf{p}',\mathbf{x}'),d(\mathbf{p}',\mathbf{y}') &<&M, \\
d(\mathbf{p}',\mathbf{v}'_{1}),d(\mathbf{p}',\mathbf{v}'_{2}) &>&M,\text{ }
\end{eqnarray*}

and

\[
d(\mathbf{p}',\mathbf{u}')<M,\, d(\mathbf{p}',\mathbf{w}')>M.
\]

Since $\mathbf{u}'$ and $\mathbf{w}'$ are on opposite sides of $\ell' $ and $V$
is dense, $\vp'$ can be found. Thus, the ball of size $M$ centred at $\vp'$
separates $\{\mathbf{x}',\mathbf{y}'\}$ from $\{\mathbf{v}'_{1},\mathbf{v}'%
_{2}\},$ and $\mathbf{u}'$ from $\mathbf{v}'.$

Now since $f$ is a step-isometry, so is its inverse. Therefore, the same inequalities must hold for the
pre-images $\mathbf{x}=f^{-1}(\vx' ),$ $\mathbf{y}=f^{-1}(\vy')$ and $\mathbf{p}=f^{-1}(\vp')$.
Also, $\mathbf{x}$ and $\mathbf{y}$ are antipodal
points with respect to $\{\mathbf{v}_{1},\mathbf{v}_{2}$. The ball of radius $M$ centered at $\mathbf{p}$ separates $\{%
\mathbf{x},\mathbf{y}\}$ from $\{\mathbf{v}_{1}
,\mathbf{v}_{2}\},$ and separates $\mathbf{u}$ from $\mathbf{v}$. By our choice of $M,$ this gives a contradiction.
\end{proof}

We defer the proof of Lemma~\ref{lem:exist poly} to Section~\ref{sec:proofs1}.

\begin{proof}[Proof of Lemma~\ref{lem:respect}]
If $d_P$ is a polygonal metric, then let $\mathcal{G}$ be the (finite) set of generators of $P$; that is, the set of normals to the sides of $P$. If $d_P$ is a smooth metric, then let $\mathcal{G}$ be the set of generators $\va$ of $P$ such that there is a line through two points of $V$ which has $\va$ as its normal vector.
Since $V$ is dense, $\mathcal{G}$ will be a generator set of $P$, and since $V$ is countable, $\mathcal{G}$ is also countable.
We will show, using induction, that for each set $B\subseteq V$,
$f$ must respect all lines in $\mathcal{L}(B,\mathcal{G})$.

Lemmas \ref{lem:exist non-poly} and \ref{lem:exist poly} show that any line through a point in $B$ and with normal in $\mathcal{G}$ must be respected. Lemma \ref{lem:consistent} ($iv$), shows that any integer parallel of these lines must also be respected. This shows that $f$ must respect all lines in $\mathcal{L}_0(B,\mathcal{G})$, and thus, establishes the base case of the induction.

Define $\sigma: \mathcal{G}\rightarrow \Rrr^2$ as follows. First, let $d$ be a polygonal metric. Fix $\va\in\mathcal{G}$, and then let $\ell\in \mathcal{L}_0(B,\mathcal{G})$ be a line through a point $\vb\in B$ with normal vector $\va$, so $\ell $ has  equation $\va\cdot \vx=\va\cdot \vb$. Let $\ell'$ be the image of $\ell$ under the line map. Then $\sigma (\va)$ is the normal of $\ell'$. Specifically, $\ell'$ has as its equation $\sigma(\va)\cdot \vx =\sigma(\va)\cdot f(\vb)$. By Lemma \ref{lem:consistent}, item ($iii$), parallel lines that must be respected have images that are also parallel. Thus, any line which must be respected by $f$ and which has normal vector $\va$, must have as its image a line with normal vector $\sigma(\va)$. By item ($ii$) of the same lemma, if $\ell$ contains point $\vb$, then its image contains $f(\vb)$. This shows that $\sigma$ is well-defined.

Similarly, we define $\sigma $ for the case where $d$ is a smooth metric. Let $\va\in\mathcal{G}$ be the normal to the line through points $ \vv_1,\vv_2\in V$. Then let $\sigma(\va)$ be the normal of the line through $f(\vv_1)$ and $f(\vv_2)$. By Lemma \ref{lem:exist non-poly},  any line  $\ell\in\mathcal{L}_0(B,\mathcal{G})$ through a point $\vb\in B$ with normal vector $\va$ will have as its image under the line map the line with equation $\sigma(\va)\cdot \vx =\sigma(\va)\cdot f(\vb)$.

Consider the set $\mathcal{G'}=\{ \sigma(\va):\va\in\mathcal{G}\}$. Assume that all vectors $\sigma (\va)$ are scaled so that $|\sigma(\va)\cdot \vp|\leq 1$ for all points on the boundary of $P$, and equality is achieved. By Lemma \ref{lem:exist poly}, if $d_P$ is a polygonal metric, all lines parallel to one of the sides of $P$ are mapped to lines that are again parallel to the sides of $P$, and thus, $\mathcal{G}'=\mathcal{G}$. If $d_P$ is a smooth metric, then $\mathcal{G}$ is a countable dense subset of the set of all vectors from the origin to a point on the boundary of $P$. It is easy to see that $\mathcal{G'}$ must also be dense. Assume, to the contrary, that there exist two vectors $\va,\va'$ so that no vector in $\mathcal{G'}$ is \lq\lq between" $\sigma(\va)$ and $\sigma(\va')$. Fix $\vb\in B$. Let $\ell$ and $\ell'$ be the lines through $\vb$ with normal vectors $\va$ and $\va'$. Then, since $\mathcal{G}$ is dense, there must be a line $\ell^*$ through $\vb$ which separates $\ell$ from $\ell'$. This line must have a slope between the slopes of $\va$ and $\va'$. Thus, the image  of $\ell^*$ must separate the images of $\ell$ and $\ell'$ and have a slope between that of $\sigma(\va)$ and $\sigma(\va')$, which contradicts our assumption. Therefore, whether $d_P$ is polygonal or smooth, in both cases $\mathcal{G}'$ is a generator set for $P$.

We now proceed with the inductive step.
Precisely, we will recursively define a function $f^*:W\rightarrow \Rrr$, where $W$ is the set of all intersection points of lines in $\mathcal{L}(B,\mathcal G)$, and show that, for any  line $\ell\in \mathcal{L}(B,\mathcal{G})$ with equation $\va\cdot \vx = \va\cdot \vw$ ($\va\in\mathcal{G}$, $\vw\in W$), its image $\ell'$ is the line with equation $\sigma(\va)\cdot \vx = \sigma(\va)\cdot f^*(\vw) $.

To simplify notation, we will use $\mathcal{L}_n$ to denote $\mathcal{L}_n(B,\mathcal G)$.
For all $k\geq 0$, let $$\mathcal{L}_{\leq k}=\bigcup_{i=1}^k\mathcal{L}_i,$$ and let $W_k$ be the set of all intersection points of lines in  $\mathcal{L}_{\leq k}$. For the inductive step,
assume the statement holds for a fixed $k\geq 0$. Precisely, assume that $f$ must respect all lines in $\mathcal{L}_{\leq k}$ and  that  $f^*:W_k\rightarrow \Rrr$ is defined so that
for any  line $\ell\in \mathcal{L}_{\leq k}$ with equation $\va\cdot \vx = \va\cdot \vw$ ($\va\in\mathcal{G}$, $\vw\in W_k$), its image $\ell'$ is the line with equation $\sigma(\va)\cdot \vx = \sigma(\va)\cdot f^*(\vw) $.
We will show that $f^*$ can be extended so that the same holds for $\ell\in \mathcal{L}_{k+1}$.

We extend $f^*$ to $W_{k+1}$ by taking the image under $f^*$ of the intersection of two lines in $\mathcal{L}_{\leq k}$ to be the  intersection of the images of the lines under the line map. Let $\va_1,\va_2,\va_3\in \mathcal{G}$ be such that no two are linearly dependent.
Suppose that $\ell_1, \ell_2 \in\mathcal{L}_{\leq k}$.  Let $\ell_3$ be a line in $\mathcal{L}_{k+1}\setminus \mathcal{L}_{\leq k}$ formed by the intersection of $\ell_1$ and $\ell_2.$ Moreover, assume $\ell_i$ has normal vector $\va_i$, for $i=1,2,3$.  In this proof, we will say that a point $\mathbf{p}$ is to the right (left) of a line with equation $\mathbf{a}\cdot \mathbf{x}=t$ if $\mathbf{a}\cdot\mathbf{p}>t$ ($\mathbf{a}\cdot\mathbf{p}<t$). See Figure \ref{induct} for a visualization of the proof.

\begin{figure} [h!]
\begin{center}
\epsfig{figure=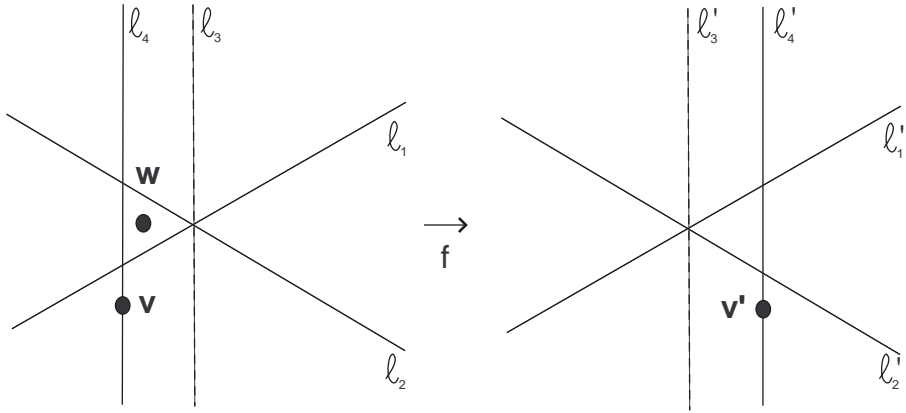}
\caption{The point $f(\mathbf{w})$ does not exist.}\label{induct}
\end{center}
\end{figure}

Assume by contradiction that $f$ does not respect $\ell_3$. Precisely, assume there exists $\mathbf{v}\in V$ which is to the left of $\ell_3$ such that $\vv'=f(\vv)$ is to the right of the line $\ell_3'$.   Let $\ell_4$ be the unique line parallel to $\ell _3$ and through $\mathbf{v}$.
By Lemmas \ref{lem:exist non-poly} and \ref{lem:exist poly}, $f$ must respect $\ell_4.$ As $V$ is dense in $\Rrr^2$, we may choose $\mathbf{w}\in V$ so that $\mathbf{w}$ is to the right of $\ell_4$, left of $\ell_1$, and left of $\ell_2.$ See Figure~\ref{induct}.
But then $f(\mathbf{w})$ must be to the right of $\ell_4'$, to the left of $\ell_1'$, and to the left of $\ell_2'$, which is a contradiction.
\end{proof}

\begin{proof}[Proof of Lemma \ref{lem:dense}]
Let $\mathcal{G}$ be a countable generator set for $P$ which contains at least  three vectors that are pairwise non-parallel. Let $B=\{\vp,\mathbf{q} \},$ and fix $\va\in \mathcal{G}$, where $\va \cdot (\mathbf{p}-\vq)=r\not\in \mathbb{Z}$ .

Without loss of generality, assume that $\vq=\mathbf{0}$. Fix $\va_1,\va_2,\va_3\in \mathcal{G}$ so that $\va_1=\va$, and no two are linearly dependent. Consider the triangular
lattice formed by the lines generated by the $\va_i $ and the point $\mathbf{0}$ in $\mathcal{L}_0(\{\mathbf{0}\}, \{ \va_1,\va_2,\va_3\});$ that
is, all lines with equations $\mathbf{a_i}\cdot\mathbf{x}=z,$ where $i\in \{ 1,2,3\}$
and $z\in \mathbb{Z}$. See the left figure in Figure~\ref{fig6}.

Consider the triangle that contains $\mathbf{p}$. We assume that this triangle is framed by the lines with equations $\va_1\cdot\vx=1$, $\va_2 \cdot\vx=0$ and $\va_3\cdot\vx=0$, as shown in Figure~\ref{fig6}. (The proof can easily be adapted to cover all other possibilities.) By definition, the line $\ell _1$ through $\vp$ with normal $\va_1$ is part of $\mathcal{L}_{0}(B,\mathcal{G}).$ The line $\ell_1$ has as equation $\mathbf{a}_{1}\cdot\mathbf{x}=r$, where $r=\va_1\cdot \vp$, and by assumption $r\in (0,1)$.

The line $\ell _1$ intersects the two sides of the triangle in $\mathbf{p}_{1}$
and $\mathbf{p}_{2}\mathbf{.}$  Following the recursive definition, this implies that $\mathcal{L}_{1}(B,\mathcal{G})$ contains the line
$\ell _{2}$ through $\mathbf{p}_{1}$ with normal $\va_2$. Precisely, $\ell_2$  has equation $%
\mathbf{a}_{2}\cdot\mathbf{x}=\mathbf{a}_{2}\cdot\mathbf{p}_{1}\mathbf{.}$ Similarly, $%
\mathcal{L}_{1}(B,\mathcal{G})$ contains the line $\ell _{3}$  through $\mathbf{p}_{2}$ which has equation $\mathbf{a}_{3}\cdot
\mathbf{x}=\mathbf{a}_{3}\cdot\mathbf{p}_{2}.$ See the right figure in Figure~\ref{fig6}.

\begin{figure}
\begin{tabular}{cc}
\epsfig{figure=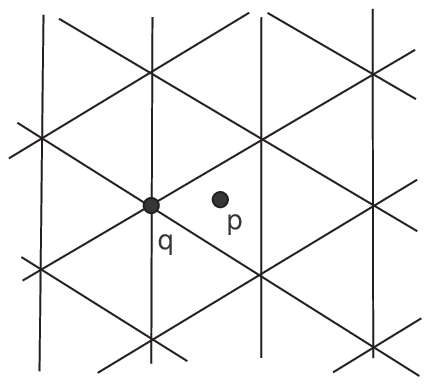, height=2in,width=2in} &
\epsfig{figure=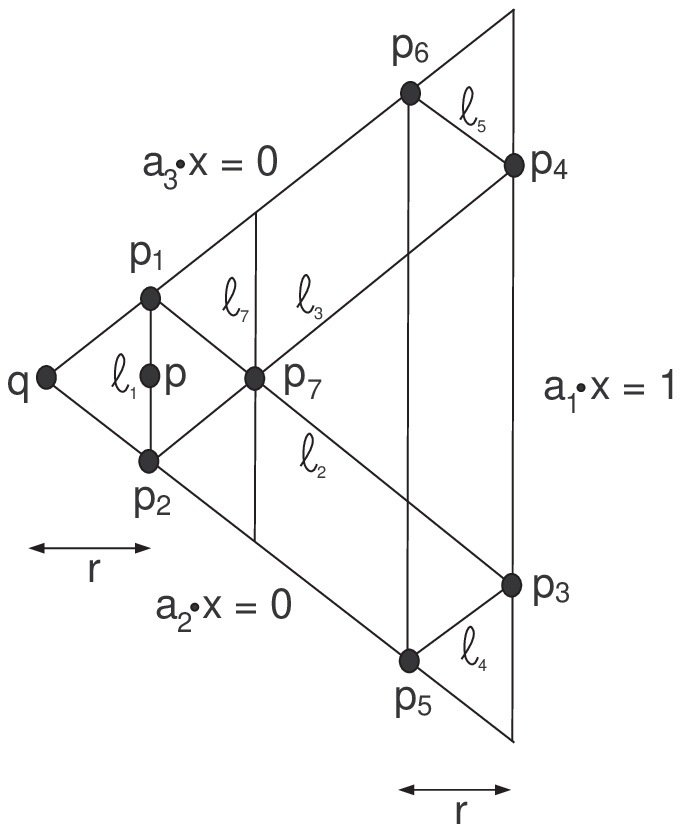, height=2.3in,width=2.3in}
\end{tabular}
\caption{Left: A triangular lattice. Right: Generating the line $\mathbf{a}_3\cdot\mathbf{p}_2=1-r.$}\label{fig6}

\end{figure}

The lines $\ell _{2}$ and $\ell _{3}$ intersect the third side of the
triangle in $\mathbf{p}_{3}$ and $\mathbf{p}_{4},$ generating two lines $%
\ell _{4}$ and $\ell _{5}$ in $\mathcal{L}_{2}(B)$ with
equations $\mathbf{a}_{3}\cdot\mathbf{x}=\mathbf{a}_{3}\cdot\mathbf{p}_{3}$ and $%
\mathbf{a}_{2}\cdot\mathbf{x}=\mathbf{a}_{2}\cdot\mathbf{p}_{4},$ respectively. The
lines $\ell _{4}$ and $\ell _{5}$ intersect with the sides of the triangle
in $\mathbf{p}_{5}$ and $\mathbf{p}_{6},$ generating one line $\ell _{6}$ in
$\mathcal{L}_{3}(B,\mathcal{G})$ with equation $\mathbf{a}_{1}\cdot\vx=\mathbf{a}_{1}\cdot\mathbf{p}_{5}=\mathbf{a}_{1}\cdot\mathbf{p}_{6}.$


By the comparison of similar triangles,
we obtain that $$\mathbf{a}_{1}\cdot\mathbf{p}_{5}=\mathbf{a}_{1}\cdot\mathbf{p}_{6}=%
1-\mathbf{a}_{1}\cdot\mathbf{p}=1-r.$$ Now the parallel lines $\mathbf{a}_{1}\cdot\mathbf{x}=r+z_2,-r+z_2$, $z_2 \in \Zzz$, may be generated from all similar triangles in the lattice in an analogous fashion.

To complete the proof, consider that the lines $\ell_2$ and $\ell_3$ intersect in point $\mathbf{p}_7$, which generates a line $\ell_7$ with normal $\va_1$ as indicated in the right figure in Figure~\ref{fig6}. Since the triangle formed by $\mathbf{p}_1,\mathbf{p}_2$, and $\mathbf{p}_7$ is half of a parallelogram formed by  $\mathbf{0},\mathbf{p}_1,$ $\vp_2$ and $\mathbf{p}_7$, it follows that $\ell_7$ has equation $\va_1\cdot\vx=2r$.
This process can be repeated to obtain all the lines $\va_1\cdot\vx =z_1r+z_2$, $z_1,z_2\in\Zzz$.

If $r$ is irrational, then the set $%
\{z_1r+z_2:z_1,z_2\in \mathbb{Z}\}$ is dense in $\mathbb{R}$ (this is a result from folklore which can be proved by using the pigeonhole principle). That completes the proof of the lemma.
\end{proof}

\section{Respecting lines when $P$ is a polygon; proof of Lemma \ref{lem:exist poly}}\label{sec:proofs2}

In this section, we consider norm-derived metrics on $\Rrr^2$ whose shape is a polygon. Specifically, fix $d\in \Omega$ so that $d$ is a polygonal metric which is not a box metric, and let $P$ be the shape of $d$.
Let $V$ be a countable dense set in $\Rrr^2$ and let $f:V\rightarrow V$ be a step-isometry. Since $P$ is a polygon, its generator set is finite. Let $\mathcal{G}$ be the generator set of $P$ where each direction is represented by only one vector, so we have that $P=\{\vx: \forall\,{\va\in\mathcal{G}},\,-1\leq \va\cdot \vx \leq 1\}$, and for all $\vx,\vy\in\Rrr$,
\[
d(\vx,\vy)=\max\{ |\va\cdot (\vx-\vy)| : \va\in\mathcal{G}\}.
\]
Recall that we use $F(P,\va)$ to denote the face of $P$ with
normal $\mathbf{a}\in\mathcal{G},$ where $\mathbf{a}$ is pointing away from the
centre of $P$.
We will show that any line through a point $\vv\in V$ and parallel to one of the sides of $P$ must be respected, and its image is a line through $\vv'=f(\vv)$ parallel to one of the sides of $P$.

Before we prove Lemma~\ref{lem:exist poly}, we need the following technical lemma. The lemma can be understood as follows. Given a point $\vp\in\Rrr$, and a line $\ell $ through $\vp$ with normal vector  $\va\in\mathcal{G}$, it is straightforward to define two polygons which are both similar to $P$, so that $\ell$ is the only line through $\vp$ with normal vector in $\mathcal{G}$ which separates the two polygons. Namely, consider two copies of $P$ side-by-side; that is, they intersect in a face defined by  $\va$, place them such that $\vp$ lies on the shared face, and then move them apart so that they do not intersect, but no other line through $\vp$ with normal vector in $\mathcal{G}$ fits between the two polygons. (See Figure \ref{fig3}.) However, to define these separating polygons one must be able to precisely define the distances.

The following lemma shows that separating polygons can also be defined if only rounded distances are given. This is achieved by making the polygons very large. Before stating the lemma, we introduce the following notation for the rounded distance. For all $\vx,\vy\in\Rrr$, define
\[
D(\mathbf{x},\mathbf{y})=\left\lfloor d(\mathbf{x}
,\mathbf{y})\right\rfloor .
\]

\begin{lemma}
\label{lem:techical}
There exist positive integers $M$ and $m$ such that, given any points $$\mathbf{p},\mathbf{z},\mathbf{z}^*,\vu, \vu^*,\vy,\vy^* \in \mathbb{R}^{2}$$ which satisfy the conditions (\ref{cond:distances}) below,
 there exists a \em{unique} line through $\mathbf{p}$ parallel to one of the sides of $P$
which separates the sets $\{\mathbf{u}:d(\mathbf{u},\mathbf{z})\leq M\}$ and $\{\mathbf{u}:d(\mathbf{u},\mathbf{z}^{* })\leq M\}.$
\begin{eqnarray}
\label{cond:distances}
D(\mathbf{u},\mathbf{u}^{* }) &=&D(\mathbf{y},\mathbf{y}^{* })=0,
\notag\\
D(\mathbf{u},\mathbf{p}) &=&D(\mathbf{u}^{* },\mathbf{p})=D(\mathbf{y},
\mathbf{p})=D(\mathbf{y}^{* },\mathbf{p})=m, \notag\\
D(\mathbf{u},\mathbf{y}) &= &D(\mathbf{u}^{* },\mathbf{y}^{*})=2m, \notag\\
D(\mathbf{z},\mathbf{u}) &=&D(\mathbf{z},\mathbf{y})=D(\mathbf{z}^{* },
\mathbf{u}^{* })= D(\mathbf{z}^{* },\mathbf{y}^{* })=M-1,\notag\\
D(\mathbf{z},\mathbf{p}) &=& D(\mathbf{z}^{* },\mathbf{p})=M,\notag\\
D(\mathbf{z},\mathbf{z}^{* })&=&2M.\notag\\
\mbox{None of the distances between }\vu,\vu^*,&\vy,\vy^*,&\vz,\vz^*\mbox{ and }\vp\mbox{ are integer.}
\end{eqnarray}
\end{lemma}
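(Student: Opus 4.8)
The plan is to choose the integers $m$ and $M$ once and for all, depending only on the shape $P$: first $m$ large compared to the norm-equivalence constant $\lambda$ (where $\lambda^{-1}\|\vx\|_2\le\|\vx\|_P\le\lambda\|\vx\|_2$) and to $1$, and then $M$ large compared to $m$ divided by the smallest Euclidean side-length of $P$. Write $P_{\vz}=\{\vu: d(\vu,\vz)\le M\}$ and $P_{\vz^*}=\{\vu: d(\vu,\vz^*)\le M\}$ for the two sets appearing in the statement; these are copies of $P$ scaled by $M$ and centred at $\vz$ and $\vz^*$. The first step is to read off the coarse geometry from the truncated distances. Since $\lfloor d(\vz,\vz^*)\rfloor=2M$ while $\lfloor d(\vz,\vp)\rfloor=\lfloor d(\vz^*,\vp)\rfloor=M$, the triangle inequality is within $1$ of equality, so $\vp$ lies within Euclidean distance $O(\lambda)$ of the segment $\vz\vz^*$; the analogous estimates for $\vu$ and $\vy$ place them within $O(\lambda)$ of this segment as well, at Euclidean distance $\Theta(m)$ from $\vp$, and on opposite sides of $\vp$ along it (using $\lfloor d(\vu,\vy)\rfloor=2m$ and $\lfloor d(\vu,\vp)\rfloor=\lfloor d(\vy,\vp)\rfloor=m$). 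Because $d(\vz,\vz^*)\in[2M,2M+1)$, there is a generator $\va\in\mathcal{G}$, normalised as in Section~\ref{sec:norm}, with $\va\cdot(\vz^*-\vz)=d(\vz,\vz^*)$, so that $F(P_{\vz},\va)$ and $F(P_{\vz^*},-\va)$ are the faces of the two scaled polygons that face each other across the segment $\vz\vz^*$, and they are parallel.

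Next I would locate $\vu$, $\vy$ and $\vp$ relative to the face $F(P_{\vz},\va)$. From $\lfloor d(\vu,\vu^*)\rfloor=0$ and $\lfloor d(\vz^*,\vu^*)\rfloor=M-1$, together with $\|\vw\|_P\ge\va\cdot\vw$ and $\va\cdot(\vz^*-\vz)\ge 2M$, one gets $\va\cdot(\vu-\vz)\in(M-1,M)$, so $\vu$ is within $\va$-distance $1$ of $F(P_{\vz},\va)$; the same holds for $\vy$, and symmetrically $\vu^*,\vy^*$ are $\va$-close to $F(P_{\vz^*},-\va)$. Since $M$ is large enough that these faces have Euclidean length $\gg m$, and $\vu,\vy$ are $\Theta(m)$ apart and straddle $\vp$, one shows that the rays $\vz\vp$ and $\vz^*\vp$ exit $P_{\vz}$ and $P_{\vz^*}$ through $F(P_{\vz},\va)$ and $F(P_{\vz^*},-\va)$, hence $\va\cdot(\vp-\vz)=d(\vz,\vp)\ge M$ and $\va\cdot(\vz^*-\vp)=d(\vz^*,\vp)\ge M$. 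Together with $P_{\vz}\subseteq\{\vx:\va\cdot(\vx-\vz)\le M\}$ and $P_{\vz^*}\subseteq\{\vx:\va\cdot(\vx-\vz^*)\ge -M\}$ (immediate from $\|\vx\|_P=\max_{\va'\in\mathcal{G}}\va'\cdot\vx$), this shows the line $\ell_{\va}$ through $\vp$ with normal $\va$ separates $P_{\vz}$ from $P_{\vz^*}$, and it is parallel to the side $F(P,\va)$ --- which gives existence.

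For uniqueness I would rule out every line through $\vp$ parallel to a side $F(P,\vb)$ with $\vb\ne\pm\va$. Such a line makes an angle bounded away from $0$ and $\pi$ with $F(P_{\vz},\va)$ (there are finitely many side directions of $P$), so within Euclidean distance $O(m)$ of $\vp$ it deviates from the thin slab between the two facing faces by more than $1$ in the $\va$-direction, and hence ends up with $\vu$ or $\vy$ (which lie in $P_{\vz}$, at distance $\Theta(m)$ from $\vp$ on the two sides) strictly on the opposite side from $\vu^*$ or $\vy^*$ (which lie in $P_{\vz^*}$); so it fails to separate. Combining, $\ell_{\va}$ is the unique separating line through $\vp$ parallel to a side of $P$. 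Throughout, the non-integrality hypothesis on the pairwise distances among $\vu,\vu^*,\vy,\vy^*,\vz,\vz^*,\vp$ is what upgrades the $\le$/$\ge$ estimates above to the strict separations the conclusion demands.

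The hard part will be the bookkeeping around vertices of the scaled polygons: a priori $\vu$, $\vy$ or $\vp$ might land near a vertex of $P_{\vz}$ or $P_{\vz^*}$, where two generators tie for the distance and the ``facing face'' is not uniquely defined, and this must either be excluded or handled case by case. The way through is to use the full rigidity of the seven-point configuration: all four of $\vu,\vu^*,\vy,\vy^*$ are simultaneously $\va$-close to faces of both scaled polygons while being $\Theta(m)$ apart, which --- once $M$ is chosen large enough relative to $m$ --- forces the relevant stretch of both boundaries to lie on a single pair of parallel faces, pinning down $\va$ and ruling out the ambiguous corner configurations. The remaining ingredients (the norm-equivalence estimates and the elementary planar geometry of lines and wedges) are routine.
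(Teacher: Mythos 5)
Your setup is the paper's: the two $M$-balls $P_{\vz}=B_M(\vz)$ and $P_{\vz^*}=B_M(\vz^*)$, the generator $\va$ achieving $d(\vz,\vz^*)$ so that the facing faces are $F(P_{\vz},\va)$ and $F(P_{\vz^*},-\va)$, the candidate separator $\va\cdot\vx=\va\cdot\vp$, and the uniqueness argument via the narrow angular wedge at $\vp$ spanned by $\vu,\vu^*$ versus the minimum angle between generators. However, the existence half has a genuine gap, and the one concrete argument you offer for it is wrong. You claim that since $\lfloor d(\vz,\vp)\rfloor=\lfloor d(\vz^*,\vp)\rfloor=M$ and $\lfloor d(\vz,\vz^*)\rfloor=2M$, near-equality in the triangle inequality places $\vp$ within Euclidean distance $O(\lambda)$ of the segment $\vz\vz^*$. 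For a polygonal norm this is false: the set where the triangle inequality is exactly tight is two-dimensional (for $L_\infty$ with $\vz=(0,0)$ and $\vz^*=(2M,0)$, every $\vp$ in the diamond with vertices $(0,0),(M,\pm M),(2M,0)$ satisfies $d(\vz,\vp)+d(\vp,\vz^*)=2M$). This failure is exactly why the polygonal case requires the seven-point configuration in the first place, and the same false principle underlies your localization of $\vu,\vy$ near the segment and the claim that they straddle $\vp$ along it.

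The correct estimates you do extract, namely $\va\cdot(\vu-\vz),\va\cdot(\vy-\vz)\in(M-1,M)$ and $\va\cdot(\vp-\vz)\in(M-1,M+1)$, are not yet sufficient: the conclusion needs $\va\cdot(\vp-\vz)\ge M$ and $\va\cdot(\vz^*-\vp)\ge M$, i.e., that $\vp$ lies in the slab between the lines extending the two facing faces. The dangerous alternative is that $\vp$ sits in the corner overlap of the annuli $\{M\le d(\vz,\cdot)<M+1\}$ and $\{M\le d(\vz^*,\cdot)<M+1\}$ near a vertex of $P_{\vz}$, where $d(\vz,\vp)$ is achieved by a different generator and the line $\va\cdot\vx=\va\cdot\vp$ cuts into $P_{\vz}$. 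Your sentence that ``one shows that the rays $\vz\vp$ and $\vz^*\vp$ exit through the facing faces,'' and the closing appeal to the ``full rigidity of the seven-point configuration,'' restate this goal rather than prove it. The paper's proof spends essentially all of its effort here: assuming $\vp$ lies outside the slab, it introduces the ball $B_m(\vp)$, imposes that no vertex of $P$ lies in the strip $1-1/M\le\va\cdot\vx\le1$ and that the boundary of $P$ between two nearby parallel lines is short, and deduces that $\vu$ and $\vy$ are then confined to one connected region on a single side of $B_m(\vp)$, whence $d(\vu,\vy)\le m+2<2m$, contradicting $D(\vu,\vy)=2m$. An argument of this type, with explicit conditions tying $m$ and $M$ to the geometry of $P$ (minimum angle between generators, vertex-free strips), is what is missing from your proposal.
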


\begin{proof} Let $m$ and $M$ be positive integers that satisfy the following conditions. Note that  the conditions impose only lower bounds, and thus, $m$ and $M$ can always be chosen so that the conditions hold by choosing them  sufficiently large.
\begin{enumerate}
\item[({\sl a})] Let $\theta_{\min}$ be the minimum angle between any two vectors in $\mathcal{G}$, and let $c>0$ be so that for all $\vx\in \Rrr$, $d(\vx,\vy)\geq c||\vx,\vy||_2$. (See Section \ref{sec:norm} to see that such a constant exist.)
We must choose $m$ so that $cm>1/\sin(\theta_{\min})$.

\item[({\sl b})]  Let $m$ be chosen such that, for all $\va\in\mathcal{G}$,  the shortest path along the boundary of $P$
from a point on the line $\ell ^{+}$ with equation $\mathbf{a}\cdot \mathbf{x}
=1/m,$ and a point on the line $\ell ^{-}$ with equation $\mathbf{a}\cdot \mathbf{x}=-1/m$
has length at most $1$ (where length is measured according to metric $d$). See Figure~\ref{fig2}.

\begin{figure} [h!]
\begin{center}
\epsfig{figure=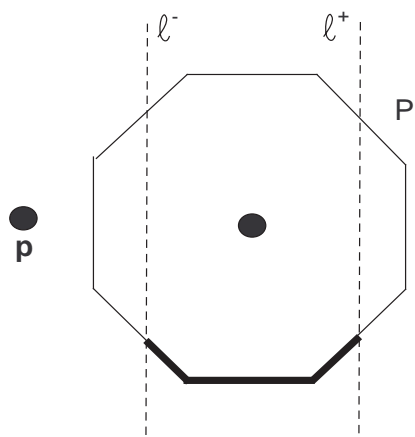,width=3in,height=2.5in}
\caption{Condition (b): the length of the bold path along the boundary is at most $1/m$.} \label{fig2}
\end{center}
\end{figure}

\item[({\sl c})]  The integer $M$ is such that, for any $\va\in \mathcal{G}$, the region strictly between the lines $\ell ^{-}$ with equation $\mathbf{a}\cdot \mathbf{x}
=1-1/M,$ and the line $\ell ^{0}$ with equation $\mathbf{a}\cdot \mathbf{x}
=1$ (an extension of the face $F(P,\va)$), contains no vertex of $P$. See Figure~\ref{jfig_new3}.

\item[({\sl d})] $2<m<M<2m$, and $M$ is  such that, for any $\va\in \mathcal{G}$, the following holds. Let lines $\ell^{-}$ and $\ell^{0}$ as defined under (c), and let $P'$ be a similar copy of $P$ enlarged by a factor $1+1/M$.
Let $\vq$ be a vertex of $P$ on $\ell^{0}$, and let $\vv$ be the point on the intersection of the boundary of $P$ and  $\ell^-$ which is closest to $\vq$. See Figure~\ref{jfig_new3}. Then for any point $\vp$ in the region enclosed by $P$, $P'$, $\ell^{-}$ and $\ell^{0}$, $d(\vv,\vp)\leq \frac{1}{2}$ and $d(\vv,\vq)\leq \frac12$.
Note that this implies that $d(\vp,\vv)\leq \frac{m}{M}$, and the same holds for $d(\vp,vq)$.

\begin{figure} [h!]
\begin{center}
\epsfig{figure=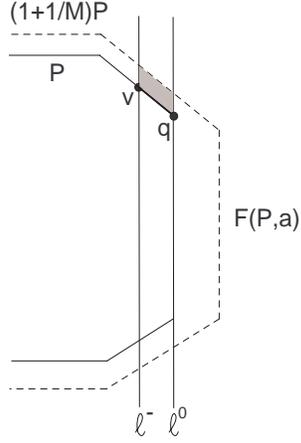,width=2in, height=2.5in}
\caption{Condition (c): no vertices of $P$ lie between $\ell^-$ and $\ell^0$. Condition (d): limits the distance from any point in the shaded area to the points $\vv$ and $\vq$.}\label{jfig_new3}
\end{center}
\end{figure}

\item[({\sl e})] The integer $M$ is large enough so that points can be chosen so that conditions (\ref{cond:distances}) hold.

\end{enumerate}

Note that conditions (a) and (b) only establish lower bounds on $m$, while (e) and the second part of (d) establish a lower bound on $M$ which is independent of $m$. Only the first part of (d) that requires $m<M<2m$ involves both $m$ and $M$. This condition can be satisfied together with the others by taking $m$ and $M$ large enough.

Let $\mathbf{p},\mathbf{z},\mathbf{z}^*,\vu,\vu^*,\vy,\vy^*\in \mathbb{R}^{2}$ be such that the conditions in (\ref{cond:distances}) hold.
Let
\[
P_{\vz}=B_M(\vz)\mbox{ and }
P_{\vz^{*}}=B_M(\mathbf{z}^{*}).
\]
See Figure~\ref{fig3}. Hence, $P_{\vz}$ and $P_{\vz^*}$ are the polygons, similar to $P$, which form the balls of radius $M$ around $\mathbf{z}$ and $\mathbf{z^*}$, respectively. We will show first that there exists a line through $\mathbf{p}$ parallel to one of the sides of $P$ which  separates $P_{\vz}$ and $P_{\vz^*}$.

\begin{figure} [h]
\begin{center}
\epsfig{figure=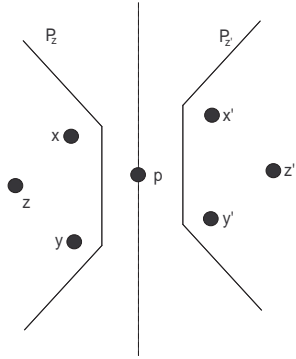,width=2in,height=2in}
\caption{The balls $P_{\mathbf{z}}$ and $P_{\mathbf{z}^*}$.}\label{fig3}
\end{center}
\end{figure}

Let $\mathbf{a}\in \mathcal{G}$ be the vector that determines $d(\mathbf{z},\mathbf{z}^{* })$ so that
\[
d(\mathbf{z},\mathbf{z}^{* })=\mathbf{a}\cdot (\mathbf{z}
^{* }-\mathbf{z}),
\]
and for all $\va_{\mathcal{G}}\in \mathcal{G}$,
\[
|\mathbf{a}_{\mathcal{G}}\cdot (\mathbf{z}^{* }-\mathbf{z})|\leq d(\mathbf{z},
\mathbf{z}^{* }).
\]

This implies that the line
segment connecting $\mathbf{z}$ and $\mathbf{z}^{* }$ intersects $P_{
\mathbf{z}}$ in $F(P_{\mathbf{z}},\mathbf{a})$ and $P_{\vz^{* }}$ in $
F(P_{\mathbf{z}^{* }},-\mathbf{a})$. See also Section \ref{sec:norm} and Figure \ref{fig4}.

Let $\ell _{1}$
and $\ell _{2}$ be the lines extending $F(P_{\vz},\mathbf{a})$ and $
F(P_{\vz^{* }},-\mathbf{a}),$ respectively. Precisely, $\ell_1$ is the line with equation $\mathbf{a}\cdot \vx =\mathbf{a}\cdot\mathbf{z}+M$, and $\ell_2$ has equation $\mathbf{a}\cdot\vx =\mathbf{a}\cdot\mathbf{z^*}-M$. Since $D(
\mathbf{z},\mathbf{z}^{* })=\lfloor d(
\mathbf{z},\mathbf{z}^{* })\rfloor=2M$ we have that $P_{\mathbf{z}}$ and $P_{
\mathbf{z}^{* }}$ do not intersect, and thus, $\ell_1$ lies to the left of $\ell_2$.  If $\mathbf{p}$ lies
between $\ell _{1}$ and $\ell _{2},$ then the line with equation $\mathbf{a}\cdot\vx =\mathbf{a}\cdot \mathbf{p}$ separates $P_{\vz}$ and $P_{\vz^*}$, as in Figure \ref{fig3}, and we are done.

Suppose now, by contradiction, that this is not the case. See Figure~\ref{fig5}.
Without loss of generality, suppose that $\vp$ lies to the left ($\mathbf{z}$-side) of $\ell _{1}.$ By definition $\mathbf{u}^*$ and $\mathbf{y}^{* }$ lie inside $P_{
\mathbf{z}^{* }},$ and thus, to the right of $\ell _{2}$. Since $d(\mathbf{u},\mathbf{u}
^{* })<1$ and $d(\mathbf{y},\mathbf{y}^{* })<1,$ we must have that
$\mathbf{u}$ and $\mathbf{y}$ both lie to the right ($\mathbf{z^*}$-side) of the line $\ell _{3}$, which is an integer parallel of $\ell_2$
defined by $\mathbf{a}\cdot \mathbf{x}=\mathbf{a}\cdot \mathbf{z^*}-M-1.$ Note that $\ell_3$ is such that $\ell_2$ and $\ell_3$ are distance 1 apart in the polygon metric $d$. Since $d(\vp,\mathbf{z}
^{* })<M+1,$ $\mathbf{p}$ must lie to the right of $\ell _{3}.$

\begin{figure} [h!]
\begin{center}
\epsfig{figure=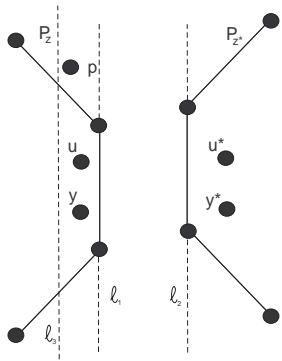,width=2in,height=2in}
\caption{The point $\mathbf{p}$ must lie to the right of $\ell _3$.}\label{fig5}
\end{center}
\end{figure}

Consider $P_{\mathbf{p}}=B_m(\vp)$, the $m$-ball around $\vp$.
Let $\mathbf{v}$ be the point on the intersection of the boundary of $P_{\vz}$ and $\ell_3$   and let $\mathbf{q}$ be the vertex of $P_{\vz}$ which is an endpoint of $F(P_z,\mathbf{a})$ (see Figure \ref{fig2}). (Following the definition, there are actually two choices for both $\vq$ and $\vv$; in each case, we choose the point closest to $\vp$.) By condition ({\sl c}) on $M$, no vertex of $P_{\vz}$ lies between $\ell_3$ and $\ell_1$, and thus,
 the line segment from $\vq$ to $\vv$ is part of the boundary of $P_{\vz}$.
By condition ({\sl d}), this line segment has length, measured according to $d$, at most $m-1$.

Since $d(\vp,\vz)\leq M+1$, $\vp$ lies within a strip of width 1 along the boundary of $P_{\vz}$. By  condition ({\sl d}), $d(\vv,\vp)\leq m$ and $d(\vv,\vq)\leq m$, and so $\mathbf{v}$ and $\vq$ lie inside $P_{\mathbf{p}}$, and, by convexity, so does the entire piece of the boundary of $P_{\vz}$ between $\vv$ and $\vq$.

\begin{figure} [h!]
\begin{center}
\epsfig{figure=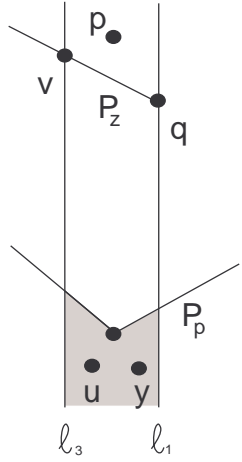}
\caption{The position of $\mathbf{u}$ and $\mathbf{y}$. The shaded area is the only area between lines $\ell_1$ and $\ell_3$ outside $P_{\vp}$ and inside $P_{\vz}$.} \label{fig8}
\end{center}
\end{figure}

We therefore have that there is only one connected region between $\ell _{1}$ and $
\ell _{3}$ outside $P_{\mathbf{p}}$ and inside $P_{\mathbf{z}}.$ See shaded area in Figure~\ref{fig8}.
Hence, $\mathbf{u}$ and $\mathbf{y}$ must both lie in the unique region
outside $P_{\mathbf{p}}$ and between $\ell _{1}$ and $\ell _{3}.$
In particular, they both lie on the same side of $P_{\mathbf{p}}$.
However, since $d(\mathbf{u},\mathbf{p})< m+1$ and $d(\mathbf{y},
\mathbf{p})<m+1,$ $\mathbf{u}$ and $\mathbf{y}$ are both within distance $1$
(in polygon distance) of $P_{\mathbf{p}}.$ By condition (b) on the choice of $m$, the length of
the piece of the boundary of $P_{\mathbf{p}}$ between lines $\ell _{1}$ and $
\ell _{3}$, measured in polygon distance, is at most $m$. Therefore,
\[
d_{P}(\mathbf{u},\mathbf{y})\leq m+2<2m,
\]
which contradicts the assumption about the choice of $\mathbf{u}$ and $\mathbf{y}$.
Hence, $\mathbf{p}$ lies between $\ell_1$ and $\ell_2$ as in Figure \ref{fig3}, and thus, the line $\ell$ with equation $\mathbf{a}\cdot \mathbf{x} =\mathbf{a}\cdot \mathbf{p}$ separates $P_{\mathbf{z}}$ and  $P_{\mathbf{z^*}}$.

Next, we prove that the line $\ell$ is unique with the given property. Suppose by contradiction that there are two distinct lines $\ell^1$ and $\ell^2$, with normal vectors $\va_1$ and $\va_2$ in $\mathcal{G}_P$, respectively, so that both $\ell^1$ and $\ell^2$ contain $\vp$ and separate $P_{\mathbf{z}}$ and  $P_{\mathbf{z}^*}$. In particular, both lines must separate $\vu$ and $\vu^*$. Now, by the conditions in (\ref{cond:distances}), $d(\vu,\vu^*)<1$, $d(\vp,\vu)\geq m$ and $d(\vp, \vu^*)\geq m$. Therefore, the Euclidean length of the line segments $\vp\vu$ and $\vp\vu^*$ is at least $cm$, where $c$ is the constant defined as in condition (a). This implies that the angle $\phi$ between the line segments $\vp\vu$ and $\vp\vu^*$ is such that $\sin(\phi)<1/cm$. The angle between $\ell^1$ and $\ell^2$ must be smaller than the angle between $\va_1$ and $\va_2$, but by condition (a) the angle between any two vectors in $\mathcal{G}$ has sine value at least $1/cm$. This gives a contradiction. \end{proof}

\begin{proof}[Proof of Lemma \ref{lem:exist poly}]
Assume without loss of generality that the origin $\mathbf{0}\in V$, and that $f(\mathbf{0})=\mathbf{0}$. Fix $\va\in \mathcal{G}$. Let $M$ and $m$ be as in Lemma \ref{lem:techical}, let $\vp=\mathbf{0}$, and choose $\vz,\vz^*,\vu,\vu^*,\vy,\vy^*$ such that $ \|\vz\|_P=\va\cdot\vz$ and $\|\vz^*\|_P=-\va\cdot \vz^*$, and the conditions (\ref{cond:distances}) of Lemma \ref{lem:techical} hold. Hence, the sets $P_{\vz}=B_M(\vz)$ and $P_{\vz^*}=B_M(\vz^*)$ are separated by the line $\ell$ with equation $\vx\cdot \va =0$.

Since $f$ is a step-isometry, and conditions (\ref{cond:distances}) only refer to rounded distances, the conditions (\ref{cond:distances}) also hold for the images $f(\vp)=\mathbf{0},f(\vz),f(\vz^*),f(\vu),f(\vu^*),f(\vy),$ and $f(\vy^*)$.
Therefore, by Lemma \ref{lem:techical} there exists a vector $\va'\in \mathcal{G}$ so that the line $\ell '$ with equation $\va'\cdot \vx =0$ separates the $M$-balls around $f(\vz)$ and $f(\vz^*)$.
Precisely, $\va'\cdot f(\vz)< 0$ and $\va'\cdot f(\vz^*)>0$.

To complete the proof, we claim that the map $f$ respects the line $\ell$ with equation $\vx\cdot \va =0$, and the image of $\ell$ under the line map is the line $\ell'$.

To see this, fix any $\vw\in V$ to the left of $\ell$, so $\va\cdot\vw<0$. Choose integer $\overline{M}\ge M+1$, and points $\bar{\vz}^*,\bar{\vu},\bar{\vu}^*,\bar{\vy},\bar{\vy}^*$ so that $D(\vw,\bar{\vz})\leq \bar{M}$ and $\bar{\vz},\bar{\vz}^*,\bar{\vu},\bar{\vu}^*,\bar{\vy},\bar{\vy}^*,\vp$ satisfy conditions (\ref{cond:distances}) with $M$ replaced by $\overline{M}$. Moreover, the new points are chosen so that $P_{\vz}$ is contained in $P_{\bar{\vz}}=B_{\overline{M}}(\bar{\vz})$ and $P_{\vz^*}$ is contained in $P_{\bar{\vz^*}}=B_{\overline{M}} (\bar{\vz}^*)$, and so that  the sets $P_{\bar{\vz}}$ and $P_{\bar{\vz}^*}$ are separated by the line $\ell$.

By definition, $\vw\in P_{\bar{\vz}}$. By Lemma~\ref{lem:techical} there must be a line $\hat{\ell}$ through $f(\vp)=\mathbf{0}$ with normal vector in $\mathcal{G}$ which separates the $\overline{M}$-balls around $f(\bar{\vz})$ and $f(\bar{\vz}^*)$. The line $\hat{\ell}$ also separates the $M$-balls around $f(\vz)$ and $f(\vz^*)$, and thus, again by Lemma \ref{lem:techical}, line $\hat{\ell}$ must equal line $\ell'$. Therefore, $f(\vw)$ must lie on the left (that is, the $f(\vz)$-side) of $\ell'$. A similar argument holds for vertices to the right of $\ell$. This completes the proof.
\end{proof}

\section*{Acknowledgements} We would like to thank the anonymous referee for useful comments which improved the correctness and presentation of the paper.

\end{document}